\documentclass[12pt,pdftex]{amsart} 

\usepackage{0header}
\allowdisplaybreaks[1]

\title{The generalized Cartan-Hadamard conjecture}

\author[M. Agnoletto]{M. Agnoletto}
\address{\textsc{Marcos Agnoletto}\newline
\indent CMCC, Federal University of ABC, Santo André, SP, Brazil}
\email{marcos.forte@ufabc.edu.br}

\author[M. F. Da Silva]{M. F. Da Silva}
\address{\textsc{M\'{a}rcio Fabiano Da Silva}\newline
\indent CMCC, Federal University of ABC, Santo André, SP, Brazil}
\email{marcio.silva@ufabc.edu.br.}

\author[S. Nardulli]{S. Nardulli}
\address{\textsc{Stefano Nardulli}\newline
\indent CMCC, Federal University of ABC, Santo André, SP, Brazil}
\email{stefano.nardulli@ufabc.edu.br}

\author[R. Resende]{R. Resende}
\address{\textsc{Reinaldo Resende}\newline
\indent Florida State University}
\email{rresende@fsu.edu}

 \date{}
\begin{document}

\date{}

\begin{abstract}
We establish the generalized Cartan-Hadamard conjecture, including rigidity in the equality case. As a corollary, we obtain that isoperimetric regions are bounded. We prove the conjecture for Alexandrov spaces with two sided curvature bounds as well. A sharp Michael--Simon-type inequality for hypersurfaces in these spaces is also shown.
\end{abstract}

\maketitle

\setcounter{tocdepth}{1}
{  \hypersetup{linkcolor=black}
  \tableofcontents
}

\section{Introduction}

This paper continues our work \cite{ADNRdiff} on the generalized Cartan--Hadamard conjecture. There we established the conjecture for small volumes under a lower Ricci curvature bound and reduced the arbitrary volume problem to proving the rigidity statement in a suitable class of Alexandrov spaces; see \cite[Assumption~1 and Theorem~1.3]{ADNRdiff}. Here we establish the rigidity statement needed in that reduction, obtaining the generalized conjecture, including its equality case, in every dimension. We refer to our work \cite{ADNRdiff} or Ghomi and Spruck's work \cite{GhomiSpruck2022} for a more extensive account of the history of the problem and of the compactness and reduction arguments underlying our approach.  

Recall that a Cartan-Hadamard manifold is a complete, simply connected Riemannian manifold with $\Sec_g\leq 0$. We denote its isoperimetric profile by $\ip{M^n}{g}$ and that of the simply connected space form $\mathbb{M}^n_{\bar k}$ of constant sectional curvature $\bar k$ by $\ipm{\bar k}$.

\begin{theorem}\label{thm:smooth CH}
Let $(M^n,g)$ be a Cartan-Hadamard manifold with $\Sec_g \leq \bar k \leq 0$. Then 
\begin{equation*}
    \ip{M^n}{g}(v)\geq \ipm{\bar k}(v)\text{ for any }v\geq 0.
\end{equation*}
Moreover, if there exists a finite perimeter set $E\subset M$ with finite volume $v > 0$ and $P_g(E) = \ipm{\bar k}(v)$, then $E$ is isometric to a geodesic ball in $\mathbb{M}^n_{\bar k}$.
\end{theorem}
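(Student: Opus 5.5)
The plan is to follow the reduction scheme of \cite{ADNRdiff}. There, Theorem~1.3 shows that the inequality $\ip{M^n}{g}(v)\geq \ipm{\bar k}(v)$ for all volumes, together with the rigidity statement, holds once a rigidity property (Assumption~1) is verified. That property concerns a class of Alexandrov spaces: the ones arising as pointed Gromov--Hausdorff limits of Cartan--Hadamard manifolds with $\Sec\leq \bar k$, normalized so that a lower curvature bound holds near the relevant region. The small-volume case is already settled there under a lower Ricci bound. The remaining task is therefore to prove the following statement. In such a limit space $X$ with curvature $\leq \bar k$, a finite perimeter set $E$ with $P(E)=\ipm{\bar k}(|E|)$ must be isometric to a geodesic ball of $\mathbb{M}^n_{\bar k}$.

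I would prove this in four steps.

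\textbf{Step 1 (regularity).} First establish regularity of the almost-minimizer $E$. Using the two-sided bounds available in the limit class, $X$ is, away from a small singular set, a $C^{1,\alpha}$ Riemannian manifold with $W^{2,p}$ metric. An isoperimetric set is then a $\Lambda$-minimizer, so $\partial E$ is a $C^{1,\alpha}$ hypersurface off a set of codimension at least $8$. It also has constant mean curvature $H$ in the viscosity or distributional sense.

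\textbf{Step 2 (lower bound on $H$).} Compare with the model profile. Since $\ipm{\bar k}$ is attained and $\ip{X}{}\geq\ipm{\bar k}$ by the limiting inequality, the one-sided derivative comparison at $v=|E|$ gives $H\leq (\ipm{\bar k})'(v)=H_{\bar k}(r_v)$. Here $r_v$ is the radius of the model ball of volume $v$.

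\textbf{Step 3 (the key estimate).} This is a sharp total mean curvature estimate, in the spirit of Ghomi--Spruck. For the convex hull or the inner parallel hypersurfaces of $\partial E$ in a space with $\Sec\leq\bar k$, one shows the Gauss--Kronecker integral inequality $\int_{\partial E} G_{\bar k}(\kappa)\geq |\mathbb{S}^{n-1}|$. Equivalently, one can use the sharp Michael--Simon type inequality mentioned in the abstract: apply it to the test function $1$ on $\partial E$, with the sharp constant coming from the model, to get
\[
P(E)\ \geq\ \ipm{\bar k}(v)\quad\text{with}\quad \int_{\partial E}\Big(\tfrac{H}{n-1}\Big)^{n-1}\gtrsim \text{model value}.
\]
Combining this with $H\leq H_{\bar k}(r_v)$ and $P(E)=\ipm{\bar k}(v)$ forces equality in every inequality used. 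The equality case of the Michael--Simon/ABP-type argument (via the normal exponential map and Jacobi field comparison, Rauch) then forces all Jacobi fields along normal geodesics to coincide with the model ones. This gives $\Sec=\bar k$ on the region swept out, so $E$ is the image of the model ball under an isometry.

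\textbf{Step 4 (smooth case).} Finally transfer the conclusion back to the smooth $M$ via \cite[Theorem~1.3]{ADNRdiff}. This yields both the inequality for every $v$ and the rigidity statement. Boundedness of isoperimetric regions is not needed here.

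The main obstacle is Step 3 in the singular Alexandrov setting. The ABP/normal-map argument requires second-order control of $\partial E$ and of the distance function. Where only $C^{1,\alpha}$ regularity holds, or on the singular set, I would first work on the regular part. There I would verify that the contact set of the touching model spheres has full measure, using the codimension-$8$ bound to show the singular part contributes nothing. The equality case would then be handled by propagating $\Sec=\bar k$ along geodesics using Alexandrov triangle comparison rigidity.
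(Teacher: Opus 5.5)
\textbf{Verdict: the proposal has a genuine gap at the rigidity step.}

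Your reduction matches the paper's. Everything comes down to the rigidity statement at a first possible failure volume in a non-collapsing Alexandrov space with two-sided curvature bounds (Proposition~\ref{prop: equality case+first failure point}). Your Step~2 is the second half of Lemma~\ref{lem:H}; note that it only needs \eqref{assump ineq} for volumes $\le V$, which is all that is available there.

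Two smaller corrections. First, applying \eqref{eq:sob} with $v\equiv1$ on $\partial\Omega$ gives $H_{\partial\Omega}^2+\bar k\ge(\omega_{n-1}/P_g(\Omega))^{2/(n-1)}=H_{\bar k}^2+\bar k$. It is not an estimate on $\int H^{n-1}$, and the Ghomi--Spruck total curvature inequality is not equivalent to it (that inequality concerns convex hypersurfaces). Second, to get $H_{\partial\Omega}=H_{\bar k}$ from this you also need $H_{\partial\Omega}>0$, which the paper obtains from $\Hess_g(r_o^2/2)\ge g$.

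The real problem is the rigidity step. You invoke ``the equality case of the Michael--Simon/ABP argument'', but no such argument is available here. The Jacobian bound that makes ABP work comes from comparison in nonnegative curvature \cite{Brendle2023}, and it goes the wrong way when $\Sec\le\bar k\le0$. Theorem~\ref{prop:sob} is instead proved by concentration-compactness and contradiction, which gives no information about extremals. Knowing only that the constant $1$ is extremal for $\mathcal Q$ on $\partial\Omega$ says nothing by itself about the shape of $\partial\Omega$. Moreover, $g$ is only $C^{1,\alpha}$, so pointwise sectional curvature and classical Jacobi fields are not available to conclude ``$\Sec=\bar k$ on the swept region''.

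What is missing is a device that turns extremality of $1$ into geometry. The paper proceeds as follows.
\begin{enumerate}
\item A Brouwer argument (Lemma~\ref{lem:balance}) chooses $p,\lambda$ so that the conformal map $\Phi=\Phi_{p,\lambda}\colon Z\to\mathbb S^n$ has zero mean with respect to $\cH^{n-1}_g\res\partial\Omega$.
\item Since $1$ minimizes the Sobolev quotient, the second variation along $1+tw_j$, with $w_j$ the coordinates of $\Phi$, gives the Poincar\'e inequality \eqref{eqn: poincare w_j}. Summing over $j$ yields $\int_{\partial\Omega}|d_{\partial\Omega}\Phi|^2\ge\mathcal Q(1)$.
\item On the other side, $|d_{\partial\Omega}\Phi|^2\le(n-1)f^2$ by \eqref{eq:dPhi}. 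The Hessian comparison for $\log f$ coming from the upper curvature bound (Lemma~\ref{lem: prop of f}(ii)) gives \eqref{eq:square} with $v=1$.
\item The resulting squeeze forces $\nabla_{\partial\Omega}\log f=0$ and $\partial_{\nu}\log f=-H_{\bar k}$; this uses $n\ge4$. Hence $\nu_{\partial\Omega}=\nabla r_p$, $w_{n+1}\equiv0$ via \eqref{eqn: poincare w_j}, and $\Omega=B^g_R(p)$.
\item Metric rigidity then comes from $F=\Psi_p^{-1}$ rather than from Jacobi fields. Since $\Psi_p$ is $1$-Lipschitz, $J_F\ge1$ a.e., and equality of volumes forces $J_F=1$ a.e. A Fubini argument along nearby model geodesics upgrades this to $d(F(x),F(y))\le d^{\bar k}(x,y)$, so $F$ is an isometry.
\end{enumerate}
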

\begin{remark}
As a consequence of Theorem~\ref{thm:smooth CH}, by \cite[Corollary 1.6]{ADNRdiff}, any isoperimetric region in Cartan-Hadamard manifolds is bounded.
\end{remark}
\begin{remark}
As shown in \cite[Proposition~8.1]{ADNRdiff}, these techniques extend to Alexandrov spaces with two-sided curvature bounds, and therefore the conjecture holds true in those spaces as well, see Theorem~\ref{thm:main}.
\end{remark}

For the Euclidean comparison, corresponding to $\bar k=0$, the classical results are due to Weil in dimension $n=2$  \cite{Weil1926}, with a subsequent proof by Beckenbach and Rad\'o \cite{BeckenbachRado1933}, Kleiner in dimension $n=3$ \cite{Kleiner1992}, and Croke in dimension $n=4$ \cite{Croke1984}. More recently, Chen, Ghomi, and Wang proved the conjecture in dimension $n=5$ \cite{chen2026cartanhadamardconjecturedimension} and subsequently in dimensions $3\leq n\leq 9$ \cite{chen2026isoperimetricinequalitycmchypersurfaces}. For the generalized conjecture, the classical results are due to Bol in dimension $n=2$ \cite{Bol1941} and Kleiner in dimension $n=3$ \cite{Kleiner1992}. 

The recent paper by OpenAI \cite{OpenAI2026} establishes the inequality in the generalized conjecture in every dimension $n\geq 2$ \cite[Theorem~1.1]{OpenAI2026} and classifies bounded equality regions for the Euclidean comparison \cite[Theorem~1.2]{OpenAI2026}. Its rigidity theorem does not address the case $\bar k<0$. The present work treats equality
for every $\bar k\leq 0$ and does so in the Alexandrov setting required by our earlier reduction in \cite{ADNRdiff}.

\subsection{The sharp Sobolev inequality}

It is well-known that Sobolev inequalities provide an analytic formulation of isoperimetric estimates. For submanifolds, the corresponding inequalities include a mean-curvature term. The foundational results of Allard \cite{Allard1972} and Michael and Simon \cite{MichaelSimon1973} give such inequalities for submanifolds (more generally, for varifolds) of Euclidean space. Hoffman and Spruck \cite{HoffmanSpruck1974} extended this framework to Riemannian ambient manifolds, with hypothesis on the ambient geometry.

Finding the sharp constants on such inequalities requires additional ideas, in Euclidean space, Aubin \cite{Aubin1975} and Talenti \cite{Talenti1976} proved sharp Sobolev inequalities. Castillon \cite{Castillon2010} improved the constant in Michael and Simon's \cite{MichaelSimon1973} and Hoffman and Spruck's \cite{HoffmanSpruck1974} results, but did not show the optimal one. Brendle \cite{Brendle2021} subsequently proved the Sobolev inequality with the sharp Euclidean constant for \emph{minimal} submanifolds in codimension at most two. Brendle and Eichmair \cite{BrendleEichmair2023} gave an alternative proof using optimal transport. Brendle \cite{Brendle2023} established Michael-Simon-type inequalities for submanifolds of manifolds with \emph{nonnegative} sectional curvature. 

The inequality used here is a sharp quadratic inequality at the critical Sobolev exponent. It should be distinguished from the classical $L^1$ inequality: the usual passage from an $L^1$ estimate to an $L^2$ estimate does not by itself yield the specific sharp coefficients required below. Case and Mengesha \cite{CaseMengesha2026} establish the sharp $L^2$-Michael--Simon inequality in Euclidean space in arbitrary codimension and characterize equality. In particular, \cite[Corollary~1.4]{CaseMengesha2026} gives precisely \eqref{eq:sob} when the ambient space is a space form. 

The smooth Cartan-Hadamard version of the sharp Michael-Simon inequality is established by OpenAI in \cite[Theorem~3.1]{OpenAI2026}. We provide a proof in the Alexandrov setting in the Appendix~\ref{app: MS}.

\begin{theorem}[Michael-Simon inequality]\label{prop:sob}
Let $n\geq 4$, $(Z^n,d)$ is a non-collapsing Alexandrov space without Alexandrov boundary and with curvature bounded from above by $\bar k \leq 0$ and from below by $\underline k$, and ${\Sigma}^{n-1}\subset Z$ be an embedded oriented $C^2$ hypersurface without boundary. Then, for every compactly supported Lipschitz function $v$ on ${\Sigma}$, 
\begin{equation}\label{eq:sob}
    \mathcal{Q}(v):=\int_{\Sigma}\bigl((q-2)|\nabla_{\Sigma} v|^2 +(n-1)(H_{\Sigma}^2+\bar k)v^2\bigr)\,d\cH^{n-1}_d \ge c_n\|v\|_{L^q({\Sigma})}^2,
\end{equation}
where $q:=\frac{2(n-1)}{n-3}$, $c_n := (n-1)\omega_{n-1}^{2/(n-1)}$ and $H_{\Sigma}$ is the normalized mean curvature of $\Sigma$. The same holds for the zero extension of every $v\in W^{1,2}_0(U)$, where $U\Subset{\Sigma}$ is relatively open in $\Sigma$.
\end{theorem}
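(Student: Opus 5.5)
Since $\Sigma$ is a $C^2$ hypersurface, the plan is to work where the ambient metric is regular along $\Sigma$ and adapt the ABP/optimal-transport approach of Brendle and of OpenAI \cite[Theorem~3.1]{OpenAI2026}.

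\textbf{Step 1: reduce to regular points.} A non-collapsing Alexandrov space with two-sided curvature bounds and no boundary is a $C^{1,\alpha}\cap W^{2,p}$ Riemannian manifold away from a closed set of codimension at least $2$; under two-sided bounds this set is in fact empty. So $Z$ carries a $C^{1,\alpha}$ Riemannian metric $g$ with $\Sec\le\bar k$ in the comparison sense, and exponential maps and Jacobi-field comparison (Rauch) hold. I would first prove \eqref{eq:sob} for metrics that are smooth near the support of $v$. Then I would pass to the Alexandrov case by approximating $g$ with smooth metrics $g_i$ whose upper curvature bounds tend to $\bar k$ and which converge in $C^{1,\alpha}$. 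Then $H_\Sigma$ (which involves only first derivatives of $g$ and second derivatives of $\Sigma$), the area measure, and $|\nabla_\Sigma v|$ all converge. Alternatively, one can bypass smoothing by doing the Jacobi comparison directly along CAT($\bar k$) geodesics.

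\textbf{Step 2: the smooth sharp inequality via an ABP argument.} By density and $|\nabla|v||\le|\nabla v|$, it suffices to take $v\ge0$ smooth with compact support; the $W^{1,2}_0(U)$ statement then follows by density. By homogeneity, normalize $\|v\|_{L^q}$. Equality in a space form is attained by the conformal factors of round spheres, and $\mathcal Q$ is the conformal Laplacian-type quadratic form written with the Gauss-equation scalar-curvature substitute $(n-1)(H^2+\bar k)$. So I would follow Brendle's scheme adapted to $L^2$, as in Case--Mengesha \cite{CaseMengesha2026}:
\begin{enumerate}
\item Solve an auxiliary PDE on $\Sigma$, namely $\operatorname{div}_\Sigma(v^{a}\nabla u)=$ (a suitable power of $v$) $-$ (terms involving $H$), with the exponents chosen so that the Euler--Lagrange structure of $\mathcal Q$ appears.
\item Consider the map $\Phi(x,y,t)=\exp_x\bigl(r(t)(\nabla u(x)+y\nu(x))\bigr)$ on the normal bundle over the contact set $A=\{D^2_\Sigma u\ge$ a second-fundamental-form correction$\}$.
\item Show surjectivity of $\Phi$ onto large balls, or onto an ambient set of volume $\gtrsim \omega_n r^n$.
\item Bound the Jacobian of $\Phi$ from above using Jacobi fields with $\Sec\le\bar k$. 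Because $\bar k\le0$, the comparison goes the right way: the Jacobian is at most its value in the model $\mathbb M_{\bar k}$, i.e.\ $\operatorname{sn}_{\bar k}$ factors.
\item Apply AM--GM to $\det$ of the Hessian term, integrate, and let $r\to\infty$. The exponential volume growth for $\bar k<0$ is exactly what generates the $\bar k v^2$ term.
\end{enumerate}

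\textbf{Step 3: bookkeeping of constants.} Comparing the leading coefficients as $r\to\infty$ yields \eqref{eq:sob} with $c_n=(n-1)\omega_{n-1}^{2/(n-1)}$, matching the model case from \cite[Corollary~1.4]{CaseMengesha2026}.

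\textbf{Main obstacle.} I expect the main obstacle to be in Step 2: choosing the right auxiliary equation and transport map so that the sharp quadratic constant, rather than the $L^1$ one, emerges, and the $\bar k$ term appears with the exact coefficient $(n-1)$. My first attempt would be to copy the structure of \cite[Theorem~3.1]{OpenAI2026} verbatim and only verify that each step uses nothing beyond geodesic completeness, Rauch comparison and the $C^2$ second fundamental form, all of which persist in the $C^{1,\alpha}$ Alexandrov setting. The delicate point is the Jacobian bound at low regularity. There I would use that $\Phi$ is Lipschitz and apply the area formula together with CAT($\bar k$) distance comparison. This avoids needing pointwise second derivatives of the metric.
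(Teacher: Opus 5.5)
\textbf{There is a genuine gap, and it is in item (iv) of your Step~2: the comparison goes the other way.} An upper bound $\Sec\le\bar k$, or the $\mathrm{CAT}(\bar k)$ condition, controls Jacobi fields from \emph{below}. By Rauch/Riccati comparison, Jacobi fields along your transport map $\exp_x(r(t)(\nabla u+y\nu))$ grow at least as fast as in $\mathbb M^n_{\bar k}$. So $|\det D\Phi|$ is at least its model value, never at most.

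An ABP argument needs the opposite inequality. It closes via $\vol_g(\Phi(A))\le\int_A|\det D\Phi|$ and then bounds the integrand pointwise by an expression in $v$. This is exactly why Brendle's Riemannian version assumes $\Sec\ge0$.

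Your low-regularity fix has the same defect. $\mathrm{CAT}(\bar k)$ distance comparison says that $\Psi_p=\exp_0^{\mathbb M^n_{\bar k}}\circ\exp_p^{-1}$ is $1$-Lipschitz. Equivalently, exponential-type maps of $Z$ expand distances relative to the model, which again bounds Jacobians from below; the paper uses precisely this to get $J_F\ge1$ in Step~3 of the rigidity proof. G\"unther--Bishop does make the covered volume large, which is the favourable side, but it cannot compensate for a Jacobian you cannot bound from above. If this step worked, the same few lines would give the sharp isoperimetric inequality in every Cartan--Hadamard manifold, which is the whole difficulty of the conjecture. Two further problems: the auxiliary equation in (i) is never specified, so the scheme is not yet a proof even in the model; and your Step~1 smoothing is not needed.

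\textbf{What the paper does instead avoids Jacobian bounds altogether.} It argues by contradiction in Yamabe style.
\begin{enumerate}
\item A local, non-sharp Aubin--Talenti inequality \eqref{eq:localsob} plus Br\'ezis--Lieb shows that if $c(U)<c_n$, the constrained infimum is attained by some $v_0$ with $\|v_0\|_{L^q}=1$.
\item The second variation along $v_0(1+s\zeta)$, with $\int|v_0|^q\zeta=0$, gives \eqref{eq: new B8}.
\item One balances $|v_0|^q\,d\cH^{n-1}_g$ with the map $\Phi_{p,\lambda}:Z\to\mathbb S^n$ of Lemma~\ref{lem:balance}, takes $\zeta=w_j$, and sums over $j$.
\item Using \eqref{eq:dPhi} and \eqref{eq:square}, this gives the chain $c(U)\le\int v_0^2|d_\Sigma\Phi|^2\le(n-1)\int f^2v_0^2\le\mathcal Q(v_0)=c(U)$.
\end{enumerate}
The upper curvature bound enters only in two places. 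First, through the Hessian comparison \eqref{eq:hessr} for $r_p$, multiplied by $\partial_r\log f<0$; this yields Lemma~\ref{lem: prop of f}(ii) in the favourable direction. Second, through the $1$-Lipschitz property of $\Psi_p$, which gives $|d_\Sigma\Phi|^2\le(n-1)f^2$. Equality throughout forces $v_0=Cf^{(n-3)/2}$, which cannot vanish outside $\overline U$ on a connected $\Sigma$. This contradicts $\|v_0\|_{L^q}=1$.

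That mechanism is the missing idea: using $\Sec\le\bar k$ only through the concavity-type inequality for $\log f$ and the contraction property of $\Psi_p$, rather than through an upper Jacobian estimate.
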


\subsection{Contribution and relation to earlier work}

Our earlier paper \cite{ADNRdiff} was first submitted to the arXiv on September~15, 2026, before the public release of OpenAI's papers on October~6, 2026. OpenAI's manuscript \cite{OpenAI2026}, dated September~23, 2026, is number 337 of that collection.

The present argument combines the reduction developed in \cite{ADNRdiff} with analytic ingredients adapted from \cite{OpenAI2026}. More precisely, Proposition~\ref{prop: equality case+first failure point} establishes the equality statement required at a first possible failure volume in our argument in \cite[Proposition~8.1]{ADNRdiff}.

Thus our contribution in this manuscript is to establish the required rigidity for all $\bar k \leq 0$. While the sharp Michael-Simon inequality and balancing maps come from \cite{OpenAI2026}, their use in the equality argument allows us to complete the program of \cite{ADNRdiff}. We include proofs of all analytic ingredients adapted from \cite{OpenAI2026}, so that the present paper and \cite{ADNRdiff} together provide a complete proof of the generalized Cartan-Hadamard conjecture—including the rigidity part—without requiring those ingredients as external black boxes.

\subsection{AI disclosure}

The mathematical work in \cite{ADNRdiff} was developed without the use of AI, apart from assistance with English grammar and flow. During the preparation of the present article, the authors used Claude Fable~5.1 and ChatGPT-6 Astra to assist in understanding OpenAI's paper \cite{OpenAI2026}. In this process, we realized that using the techniques from \cite{OpenAI2026} would allow us to finish our program initiated in \cite{ADNRdiff}, i.e., to remove \cite[Assumption 1]{ADNRdiff} from \cite[Theorem 1.3]{ADNRdiff} and then establish the full generalized conjecture. After that, we used both AI models to aid in the technical work, computations, and reference searches. We drew the pictures by hand, and ChatGPT transformed them into the TikZ figures presented below.

There is no AI-written text in this article, and the authors take full responsibility for this manuscript.

\subsection{Acknowledgments} 

M. Agnoletto acknowledges financial support from CAPES Grant No.~88887.667684/2022-00, CNPq Grants No.~201543/2024-9 and 201742/2025-0, and FAPESP Grant No.~2021/05256-0.

S. Nardulli was supported by FAPESP Young Investigator Grant No.~2021/05256-0, CNPq Research Productivity Fellowship A No.~309345/2026-0 and 1D No.~12327/2021-8, and CNPq Grants No.~441922/2023-6, 409669/2025-3, and No.~200817/2025-6. 

R. Resende gratefully acknowledges support from FAPESP Grant No.~2021/05256-0 and CNPq Grants No.~409669/2025-3 and ~441922/2023-6 for research visits to UFABC during which part of this work was carried out.

\section{Preliminaries}

\begin{center}\fbox{We use the same notations and definitions of \cite{ADNRdiff}. }\end{center}

Throughout the paper, $(Z,d)$ is an $n$-dimensional non-collapsing Alexandrov space without Alexandrov boundary and with curvature bounded from above by $\bar k \leq 0$ and from below by $\underline k$.
By \cite[Proposition~2.2(iii)]{ADNRdiff}, $Z$ admits a $C^{3,\alpha}$ atlas and a $C^{1,\alpha}$ Riemannian metric $g$ whose induced distance is $d$, for every $\alpha\in(0,1)$.
We work in this atlas throughout.

For $p\in Z$, write $r_p:=d(p,\cdot)$. By \cite[Propositions~1.6 and~4.1]{KL}, $r_p\in C^{1,1}_{\mathrm{loc}}(Z\setminus\{p\})$ and $r_p^2\in C^{1,1}_{\mathrm{loc}}(Z)$. The global curvature condition ($\mathrm{CAT}(\bar k)$ and $\mathrm{CBB}(\underline k)$) give
\begin{equation}\label{eq:hessr}
\frac{s_{\bar k}'(r_p)}{s_{\bar k}(r_p)}
(g-dr_p\otimes dr_p)
\le\Hess_g r_p
\le\frac{s_{\underline k}'(r_p)}{s_{\underline k}(r_p)}
(g-dr_p\otimes dr_p)
\end{equation}
on $Z\setminus\{p\}$, almost everywhere and weakly. This follows from the comparison argument of \cite[Section~2.4]{KL} and \cite[Theorems~8.23 and~9.25]{AKP}. Moreover, equation~(2.2) in the proof of \cite[Proposition~2.2]{ADNRdiff} gives, almost everywhere and weakly on $Z$, the following
\begin{equation}\label{eq:hessdistance}
\Hess_g\left(\frac{r_p^2}{2}\right)\ge g
\end{equation}

For an oriented hypersurface $\Sigma^{n-1}$ with unit normal $\nu_\Sigma$, we choose the following convention for the mean curvature $H_\Sigma:=(n-1)^{-1}\operatorname{div}_\Sigma \nu_\Sigma$. The following lemma is standard for $C^2$ functions and, by a mollification argument in Appendices \ref{app:lem restriction item i} and \ref{app:lem restriction item ii}, we extend it to $C^{1,1}$ functions.

\begin{lemma}\label{lem:restriction}
Let $ u \in C^{1,1}_{\mathrm{loc}}(Z)$ and let $B$ be a continuous symmetric covariant two-tensor. If $\Hess_g u \le B$ weakly, then 
\begin{enumerate}[\upshape (i)]
    \item on every embedded oriented $C^2$ hypersurface $\Sigma^{n-1}$, it holds in the weak sense that
    \begin{equation}\label{eq:restrict}
    \Delta_\Sigma u 
    \le\operatorname{tr}_{T\Sigma}B-(n-1)H_\Sigma\,\partial_{\nu_\Sigma}u ;
    \end{equation}

    \item for every compact set $K\subset Z$, $\alpha\in (0,1)$ and $p\geq 1$, {there exist a relatively compact neighborhood $U$ of $K$, $u\in C^{3,\alpha}(\overline U)$, and numbers $\delta_\varepsilon\searrow 0$ such that
    \begin{equation}\label{eq:ambient-approximation}
     u _\varepsilon\longrightarrow u \quad\text{in }C^1(\overline{U})\text{ and in }W^{2,p}(U),\,\,\,\text{and}\,\,\,
    \Hess_g u _\varepsilon\le B+\delta_\varepsilon g\,\,\text{on}\,\,K.
    \end{equation}}
\end{enumerate}
Moreover, if $\Hess_g u \ge B$ weakly, the inequalities in \eqref{eq:restrict} and \eqref{eq:ambient-approximation} are reversed.
\end{lemma}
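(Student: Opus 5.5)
The plan is to prove (ii) first, since (i) follows from (ii) by passing to the limit.

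\emph{Step 1: approximation (ii).} Cover $K$ by finitely many $C^{3,\alpha}$ charts, take a partition of unity $\{\chi_j\}$, and set $u_\varepsilon:=\sum_j\chi_j\,(u\circ\phi_j^{-1}*\rho_\varepsilon)\circ\phi_j$, where $\rho_\varepsilon$ is a standard mollifier. Since $u\in C^{1,1}$, we have $u\in W^{2,p}_{\mathrm{loc}}$ for every $p$, so $u_\varepsilon\to u$ in $C^1(\overline U)$ and in $W^{2,p}(U)$. Regularity of $u_\varepsilon$ is limited by the chart and partition-of-unity regularity, which is why the paper works in the $C^{3,\alpha}$ atlas.

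The substantive point is the one-sided Hessian bound. In coordinates,
\[
\Hess_g u=\partial^2u-\Gamma\,\partial u ,
\]
with $\Gamma\in C^{0,\alpha}$ because $g\in C^{1,\alpha}$. Write the weak condition $\Hess_g u\le B$ as: $\partial^2 u\le B+\Gamma\partial u$ as distributions tested against nonnegative functions and every constant vector $\xi$. Mollification preserves this positivity, so
\[
\partial^2u_\varepsilon \le (B+\Gamma\partial u)*\rho_\varepsilon .
\]
The right-hand side is continuous, so it converges uniformly on compacts to $B+\Gamma\partial u$. Also $\Gamma\partial u_\varepsilon\to\Gamma\partial u$ uniformly, because $u_\varepsilon\to u$ in $C^1$.

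Gluing through the partition of unity adds terms $\partial\chi_j\otimes\partial(u_\varepsilon-u)$ and $(\partial^2\chi_j)(u_\varepsilon-u)$, and these tend to $0$ uniformly. One extra point needs care: the term $\sum_j\chi_j\,\partial^2(\text{mollified pieces})$ in different charts has to be compared with the same tensor $B$. For this, use that the chart transitions are $C^{3,\alpha}$, so the transformation error is $O(|\partial u_\varepsilon-\partial u|)$ plus first-order terms that converge uniformly.

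Collecting all these uniformly vanishing errors into $\delta_\varepsilon$ gives $\Hess_g u_\varepsilon\le B+\delta_\varepsilon g$ on $K$.

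\emph{Step 2: deduce (i).} For a $C^2$ function $w$, the standard Gauss-formula identity on $\Sigma$ is
\[
\Delta_\Sigma w=\operatorname{tr}_{T\Sigma}\Hess_g w-(n-1)H_\Sigma\,\partial_{\nu}w .
\]
The sign here matches the convention $H_\Sigma=(n-1)^{-1}\operatorname{div}_\Sigma\nu$. Apply it to $w=u_\varepsilon$ on $\Sigma\cap K$ and use Step 1 to get
\[
\Delta_\Sigma u_\varepsilon\le\operatorname{tr}_{T\Sigma}B+(n-1)\delta_\varepsilon-(n-1)H_\Sigma\partial_\nu u_\varepsilon .
\]
Multiply by a nonnegative $\varphi\in C^1_c(\Sigma)$ and integrate by parts, so that only $\nabla_\Sigma u_\varepsilon$ appears. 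Then let $\varepsilon\to0$: the $C^1$ convergence of $u_\varepsilon$ on the compact set $\operatorname{supp}\varphi$ passes every term to the limit, since $H_\Sigma$ and $\nu$ are continuous. This is exactly the weak form of \eqref{eq:restrict}.

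The reversed case $\Hess_g u\ge B$ follows by applying the argument to $-u$ and $-B$.

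\emph{Expected obstacle.} The delicate step is Step 1. Mollifying a one-sided bound on a covariant Hessian with only $C^{0,\alpha}$ Christoffel symbols, patched across charts, must still yield a uniform error $\delta_\varepsilon$. This works because every error term involves either first derivatives, which converge uniformly, or a mollified continuous tensor, which also converges uniformly; no term involves second derivatives of $u$ paired against non-smooth coefficients. Continuity of $B$ is used precisely so that $B*\rho_\varepsilon\to B$ uniformly.
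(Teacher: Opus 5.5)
Your proposal is correct and uses the same ingredients as the paper. These are: mollification in a chart, which preserves $D^2u\le B+\Gamma\,du$ because $\rho_\varepsilon\ge0$; Christoffel and partition-of-unity error terms that vanish uniformly (written through $\sum_i d\chi_i=0$ and $\sum_i\Hess_g\chi_i=0$ in terms of $u_{\varepsilon,i}-u$); and the identity $\Delta_\Sigma u_\varepsilon=\operatorname{tr}_{T\Sigma}\Hess_g u_\varepsilon-(n-1)H_\Sigma\,\partial_{\nu_\Sigma}u_\varepsilon$ followed by integration by parts and a $C^1$ limit. The only difference is the order: the paper proves (i) with a single-chart mollification and localizes the test function by a partition of unity, then glues for (ii), whereas you glue first and deduce (i) from (ii). Your concern about cross-chart ``transformation errors'' is unnecessary, since each $\Hess_g u_{\varepsilon,j}$ is a tensor and its bound against $B$ holds independently of coordinates.
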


\subsection{The {balancing} map}

By \cite[Proposition~8.3]{berestovskij1993multidimensional}, $Z$ is geodesically complete, cf. also \cite[Section~2.2]{KL}. Completeness and the $\mathrm{CAT}(0)$ property {imply that the exponential map is }a homeomorphism $\exp_p:T_pZ\to Z$ for every $p\in Z$. 
Identifying $T_pZ$ isometrically with the tangent space at the origin of $\mathbb M_{\bar k}^n$, define
\begin{equation}\label{eq:radial-comparison-map}
\Psi_p:=\exp_0^{\mathbb{M}^n_{\bar k}}\circ\exp_p^{-1}:Z\longrightarrow\mathbb M_{\bar k}^n.
\end{equation}
By a direct computation using the upper and lower curvature comparisons (namely \cite[Equations (4.2) and (4.3)]{ADNRdiff} that follows from \eqref{eq:hessr} in our setting of Alexandrov spaces) imply that $\Psi_p$ is $1$-Lipschitz and has locally Lipschitz inverse.

Fix $p\in Z$, $\lambda>0$. For $x\ne p$, let $\theta_p(x)=\exp_p^{-1}(x)/r_p(x)$ and define the locally Lipschitz map $\Phi_{p,\lambda}:Z \to \mathbb{S}^n\subset T_pY\oplus \R$ by
\begin{equation}\label{eq:Phi}
\Phi_{p,\lambda}(x):=
\left(\frac{2z}{1+z^2}\theta_p(x),\frac{1-z^2}{1+z^2}\right),
\
z:=\lambda\frac{s_{\bar k}(r_p(x)/2)}{s_{\bar k}'(r_p(x)/2)}, \text{ and }\Phi_{p,\lambda}(p):=(0,1).
\end{equation}
See Figure~\ref{fig:balancing-construction} for intuition on this map. Also define $f_{p,\lambda} \in C^{1,1}_{loc}(Z)$ by
\begin{equation*}
f_{p,\lambda}(x):=\frac{\lambda}{s_{\bar k}'(r_p(x)/2)^2+\lambda^2s_{\bar k}(r_p(x)/2)^2}\\=\frac{\lambda}{1+(\lambda^2-\bar k)s_{\bar k}(r_p(x)/2)^2}.
\end{equation*}
Whenever $p$ and $\lambda$ are fixed, we omit from the notation. 
% Figure 2: geodesic spheres and their spherical images for bar k = 0.
% The normalization in main.tex gives the equatorial radius 2/lambda.
% Complete figure; the source outline and sphere match Figure 3.
\begin{figure}[!htbp]
\centering
\setlength{\abovecaptionskip}{6pt}
\resizebox{.98\linewidth}{!}{%
\begin{tikzpicture}[aubin base]
  \ifdefined\aubinFullPage
    \path[use as bounding box] (.75,.90) rectangle (16.50,5.95);
  \fi

  % The same source patch and base point as in Figure 3.
  \begin{scope}[yshift=.65cm]
    \def\sourcepatch{
      (1.00,.81) .. controls (.72,.87) and (1.07,2.62) .. (2.24,3.69)
      .. controls (3.38,4.36) and (6.28,3.82) .. (7.35,3.04)
      .. controls (6.60,2.56) and (6.16,1.97) .. (5.90,1.46)
      .. controls (4.39,1.89) and (2.00,1.23) .. (1.00,.81) -- cycle}
    \coordinate (p) at (3.84,2.65);
    % A gently curved schematic geodesic sphere, in coordinates about p.
    \def\geodesicsphere{
      (-1.20,-.07) .. controls (-1.13,.24) and (-.62,.57) .. (-.08,.55)
      .. controls (.39,.57) and (1.03,.32) .. (1.20,.05)
      .. controls (1.36,-.19) and (.72,-.43) .. (.20,-.46)
      .. controls (-.36,-.51) and (-1.20,-.34) .. (-1.20,-.07) -- cycle}
    \begin{scope}[overlay]
      \clip \sourcepatch;
      \begin{scope}[shift={(p)}]
        \foreach \factor in {1.28,1.62,1.96}{
          \draw[draw=aubinGreen,line width=.70pt,scale=\factor] \geodesicsphere;
        }
        \path[fill=aubinRed!3] \geodesicsphere;
        \foreach \factor in {.30,.58,.86}{
          \draw[draw=aubinRed,line width=.70pt,scale=\factor] \geodesicsphere;
        }
        \draw[draw=aubinPurple,line width=1.20pt] \geodesicsphere;
      \end{scope}
    \end{scope}
    \draw[aubin outline] \sourcepatch;
    \node[aubin label,fill=white,inner sep=1pt] at (1.65,1.55) {$Z$};
    \node[aubin label,text=aubinPurple,anchor=west,fill=white,inner sep=1.2pt]
      (ball-label) at (7.15,4.05) {$B_{2}^{g}(p)$};
    \draw[draw=aubinPurple,line width=.65pt,
      -{Stealth[length=4pt,width=3pt]},shorten >=2pt]
      (4.78,2.93) .. controls (5.35,3.60) and (6.30,3.93) .. (ball-label.west);
    \fill[aubinPurple] (p) circle[radius=2.7pt];
    \node[aubin label,text=aubinPurple,anchor=north,
      fill=white,inner sep=.6pt] at (3.84,2.48) {$p$};
  \end{scope}

  \draw[aubin map] (8.02,3.60) -- (9.30,3.60);
  \node[aubin label,anchor=south] at (8.66,3.78)
    {$\Phi_{p,1}$};

  % The plane is in front of the lower hemisphere, as in the sketch.
  \begin{scope}[shift={(12.30,3.27)}]
    \def\sphereR{1.92}
    \def\equatorry{.72}
    % Construct opposite sides from the same two vectors.
    \def\planeux{6.10}
    \def\planeuy{.20}
    \def\planevx{1.90}
    \def\planevy{2.20}
    \def\planepatch{
      (-4.00,-1.20) -- ++(\planeux,\planeuy)
      -- ++(\planevx,\planevy) -- ++(-\planeux,-\planeuy) -- cycle}
    \def\upperhemisphere{
      (-\sphereR,0) arc[start angle=180,end angle=0,radius=\sphereR]
      arc[start angle=0,end angle=-180,x radius=\sphereR,y radius=\equatorry]
      -- cycle}
    \pgfmathsetmacro{\viewElevation}{asin(\equatorry/\sphereR)}

    \path[fill=white] (0,0) circle[radius=\sphereR];
    \draw[aubin outline,fill=white] \planepatch;
    % Omit the rear border where it is covered by the upper hemisphere.
    \path[fill=white] \upperhemisphere;

    % The axis is dashed inside the sphere and solid beyond its poles.
    \draw[draw=black!40,line width=.45pt,dash pattern=on 2pt off 2.5pt]
      (0,-\sphereR) -- (0,\sphereR);
    % Halve the extensions beyond the north and south poles.
    \draw[draw=black,line width=.65pt] (0,-2.195) -- (0,-\sphereR);
    \draw[draw=black,line width=.65pt,-{Stealth[length=4pt,width=3pt]}]
      (0,\sphereR) -- (0,2.365);

    % Three red latitude spheres, with dashed rear arcs.
    \foreach \latitude in {22,46,70}{
      \pgfmathsetmacro{\rx}{\sphereR*cos(\latitude)}
      \pgfmathsetmacro{\ry}{\rx*sin(\viewElevation)}
      \pgfmathsetmacro{\cy}{\sphereR*sin(\latitude)*cos(\viewElevation)}
      \draw[draw=aubinRed!75,line width=.5pt,dash pattern=on 3pt off 2.5pt]
        (-\rx,\cy) arc[start angle=180,end angle=0,x radius=\rx,y radius=\ry];
      \draw[draw=aubinRed,line width=.70pt]
        (-\rx,\cy) arc[start angle=180,end angle=360,x radius=\rx,y radius=\ry];
    }

    % Green arcs covered by the foreground plane remain dashed.
    \foreach \latitude in {-22,-46,-70}{
      \pgfmathsetmacro{\rx}{\sphereR*cos(\latitude)}
      \pgfmathsetmacro{\ry}{\rx*sin(\viewElevation)}
      \pgfmathsetmacro{\cy}{\sphereR*sin(\latitude)*cos(\viewElevation)}
      \draw[draw=aubinGreen!75,line width=.5pt,dash pattern=on 3pt off 2.5pt]
        (0,\cy) ellipse[x radius=\rx,y radius=\ry];
      \begin{scope}[even odd rule,overlay]
        \clip (-5,-3) rectangle (5,3.5) \planepatch;
        \draw[draw=aubinGreen,line width=.70pt]
          (-\rx,\cy) arc[start angle=180,end angle=360,x radius=\rx,y radius=\ry];
      \end{scope}
    }

    \draw[draw=aubinPurple,line width=.95pt,dash pattern=on 4pt off 3pt]
      (-\sphereR,0) arc[start angle=180,end angle=0,x radius=\sphereR,y radius=\equatorry];
    \draw[draw=aubinPurple,line width=1.20pt]
      (-\sphereR,0) arc[start angle=180,end angle=360,x radius=\sphereR,y radius=\equatorry];
    % The side contour is hidden where the plane passes in front of it.
    \draw[draw=black!65,line width=.65pt,dash pattern=on 4pt off 3pt]
      (\sphereR,0) arc[start angle=0,end angle=-180,radius=\sphereR];
    \begin{scope}[even odd rule,overlay]
      \clip (-5,-3) rectangle (5,3.5) \planepatch;
      \draw[aubin outline]
        (\sphereR,0) arc[start angle=0,end angle=-180,radius=\sphereR];
    \end{scope}
    \draw[aubin outline]
      (-\sphereR,0) arc[start angle=180,end angle=0,radius=\sphereR];
    \draw[aubin outline] (-4.00,-1.20) -- ++(\planeux,\planeuy);
    \fill[aubinPurple] (0,\sphereR) circle[radius=2.7pt];
  \end{scope}
  \node[aubin label,anchor=north west] at (14.50,2.00) {$T_pZ$};
  \node[aubin label,anchor=east] at (12.24,5.585) {$\mathbb R$};
  \node[aubin label,text=aubinPurple,anchor=south west]
    at (12.42,5.39) {$\Phi_{p,1}(p)$};
\end{tikzpicture}%
}
\caption{Here $\lambda = 1$ and $\bar k =0$. The map $\Phi_{p,1}:Z\to\mathbb S^n\subset T_pZ\oplus\mathbb R$ maps $p$ to $(0,1)$. The map $\Phi_{p,1}$ maps each geodesic sphere in $B_2^g(p)$ centered at $p$ to a geodesic sphere centered at north pole $\Phi_{p,1}(p) = (0,1)$ (see red circles) and each geodesic sphere centered at $p$ outside $B_2^g(p)$ to a geodesic sphere below the equator (see green circles).}
\label{fig:balancing-construction}
\end{figure}

We collect several technical properties of the construction above when applied to the model space $\mathbb{M}^n_{\bar k}$ itself, these will be used later and proved in Appendix~\ref{app: prop of f i} and \ref{app: prop of f ii}.

{\begin{lemma}\label{lem: prop of f}
\begin{enumerate}[\upshape (i)]
    \item Let $\Phi^{\bar k}_{0,\lambda}$ and $f_{0,\lambda}$ denote the maps $\Phi_{p,\lambda}$ and $f_{p,\lambda}$ for $\mathbb{M}^n_{\bar k}$ at $p=0$, respectively. Then we have $(\Phi^{\bar k}_{0,\lambda})^*\sigma=f_{0,\lambda}(r)^2\delta^{\bar k}$.
    \item For every $p \in Z$, $\lambda > 0$, and $C^2$ embedded oriented hypersurface $\Sigma \subset Z$, the function $f = f_{p,\lambda}$ satisfies the following inequality in the weak sense:
    \begin{align*}
    2\Delta_{\Sigma}\log f &\leq -(n-3)|\nabla_\Sigma\log f|^2 - (n-1)(-\bar k+f^2)\nonumber -(n-1)(\partial_{\nu_\Sigma}\log f)^2 \\&\quad -2(n-1)H_{\Sigma}\,\partial_{\nu_\Sigma}\log f.
    \end{align*}
\end{enumerate}
\end{lemma}}

Since $\Psi_p$ is $1$-Lipschitz and preserves the radial coordinate, for a $C^1$ hypersurface $\Sigma^{n-1}$, Rademacher's theorem applied to $\Psi_p|_\Sigma$ gives differentiability of $\Psi_p|_\Sigma$ at $\cH_g^{n-1}$-almost every $x\in\Sigma$. Thus, since $\Phi^{\bar k}_{0,\lambda}(\Psi_p(x))=\Phi(x)$, the chain rule gives, for every $e\in T_x\Sigma$ and for $\cH^{n-1}_g$-almost every $x \in \Sigma$, $|d_\Sigma\Phi_x(e)|=\bigl|d(\Phi^{\bar k}_{0,\lambda})_{\Psi_p(x)}(d_\Sigma\Psi_p)_x(e)\bigr|=f(x)\bigl|(d_\Sigma\Psi_p)_x(e)\bigr|_{\delta^{\bar k}}
\le f(x)|e|_g$. Summing over an orthonormal basis $e_1,\ldots,e_{n-1}$ of $T_x\Sigma$, we obtain the following bound on the Hilbert–Schmidt norm:
\begin{equation}\label{eq:dPhi}
|d_\Sigma\Phi|^2 \le(n-1)f^2\ \text{for}\ \cH^{n-1}_g\text{-almost every point of}\ \Sigma.
\end{equation}

The proof of the following lemma is in Appendix~\ref{appendix:balancing}.

\begin{lemma}\label{lem:balance} 
Let $\mu$ be a compactly supported probability measure on $Z$ with $\mu(\{y\}) = 0$ for any $y\in Z$. There exist $p\in Z$ and $\lambda>0$ such that $\int_Z\Phi_{p,\lambda}(x)\,d\mu(x)=0$.
\end{lemma}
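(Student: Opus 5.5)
We will prove Lemma~\ref{lem:balance} by the classical Hersch-type degree argument, using the two parameters $(p,\lambda)$ as coordinates on an open $n+1$-dimensional family. The family should behave like the Möbius dilations of the ball. First we compactify the parameter space. Fix a base point $o\in Z$ and write $T_oZ\cong\R^n$. Through $\exp_o$, the points $p\in Z$ correspond to vectors $\exp_o^{-1}(p)$. We therefore parametrize by $(p,\lambda)\in Z\times(0,\infty)$ and map this homeomorphically onto the open unit ball $B^{n+1}$ via
\[
(p,\lambda)\mapsto \xi(p,\lambda):=\rho(p,\lambda)\,\bigl(\tfrac{\exp_o^{-1}(p)}{1+|\exp_o^{-1}(p)|},\, \tfrac{1-\lambda}{1+\lambda}\bigr)\Big/ \bigl|\ldots\bigr|,
\]
or any similar homeomorphism $Z\times(0,\infty)\cong B^{n+1}$. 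The exact formula is irrelevant; what matters is that each boundary direction corresponds to one of three regimes: $\lambda\to0$, $\lambda\to\infty$, or $r_o(p)\to\infty$. Define
\[
F(p,\lambda):=\int_Z\Phi_{p,\lambda}\,d\mu\in \overline{B^{n+1}}\subset T\oplus\R .
\]
Since $\mu$ has compact support, $\Phi_{p,\lambda}(x)$ is continuous in $(p,\lambda,x)$ away from $x=p$, and $\mu(\{p\})=0$, dominated convergence shows that $F$ is continuous. To compare values of $\Phi_{p,\lambda}$ for different $p$ in a single sphere, we identify $T_pZ$ with $T_oZ$ by parallel transport along the geodesic $op$ (a continuous choice, since geodesics are unique in CAT($0$)). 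The balancing property is unaffected by this identification, because it is a fixed isometry for each $p$.

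Next we record the boundary behavior. (a) As $\lambda\to0$, uniformly for $p$ in compacts and $x\in\operatorname{supp}\mu$, we have $z\to0$, so $\Phi\to(0,1)$ and $F\to(0,1)$, the north pole. (b) As $\lambda\to\infty$ with $p$ in a compact set, $z\to\infty$ except for $x$ in shrinking balls around $p$. Since $\mu$ has no atoms, $\mu(B_\varepsilon(p))\to0$ uniformly on compacts, so $F\to(0,-1)$, the south pole. (c) As $r_o(p)\to\infty$ with $\lambda$ arbitrary, all $x\in\operatorname{supp}\mu$ satisfy $\theta_p(x)\approx$ the direction from $p$ back toward $o$, uniformly, by CAT($0$) angle comparison. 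Hence $F(p,\lambda)$ lies near the unit sphere, at points of the form $\bigl(\tfrac{2z}{1+z^2}(-u_p),\tfrac{1-z^2}{1+z^2}\bigr)$ with $u_p=\exp_o^{-1}(p)/r_o(p)$ and $z\approx$ common value. The conclusion is that $F$ extends continuously to $\partial B^{n+1}\cong\mathbb S^n$, that it takes values in $\mathbb S^n$ there, and that the extension is the map $(u,t)\mapsto$ (meridian point in direction $-u$ at height determined by $t$). Here $\Phi$ is a homeomorphism $\mathbb S^n\to\mathbb S^n$ of degree $\pm1$ (an antipodal reflection in the horizontal directions, composed with a reparametrization of latitude), and in particular it is not nullhomotopic.

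Finally, we conclude by degree theory. If $F$ never vanished, then $F/|F|$ would be a continuous map $\overline{B^{n+1}}\to\mathbb S^n$, and its restriction to the boundary would have degree zero. This contradicts the degree $\pm1$ obtained in the previous step, so $F(p,\lambda)=0$ for some interior parameter.

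The main obstacle is step (c) together with its uniformity at the corners, where $r_o(p)\to\infty$ and simultaneously $\lambda\to0$ or $\lambda\to\infty$. We must show that the radial profile of $z$ along $\operatorname{supp}\mu$ becomes asymptotically constant relative to the spread, namely $r_p(x)=r_o(p)+O(1)$. When $\bar k<0$, $s_{\bar k}/s'_{\bar k}\to$ a finite limit $1/\sqrt{-\bar k}$, which makes this easy. When $\bar k=0$, $z=\lambda r/2$ with $r=r_o(p)+O(\operatorname{diam})$, so the ratio of the $z$-values tends to $1$. In either case the boundary map is continuous. We will organize the compactification so that $\lambda$ is rescaled as $\lambda\cdot s_{\bar k}(r_o(p)/2)/s'_{\bar k}(r_o(p)/2)$ near infinity, which makes the boundary map explicit and visibly of degree one.
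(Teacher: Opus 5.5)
Your argument is correct, but it takes a different route from the paper. You compactify all of $Z\times(0,\infty)$, so you must show that $F$ extends continuously to the boundary sphere with an explicit limit of degree $\pm1$. The paper never passes to infinity. It fixes $R>\max_{\operatorname{spt}\mu}r_o$ and works on the bounded cylinder $C=\{|\xi|\le R\}\times[\lambda_0,\lambda_1]$, where only \emph{sign} information on the faces is needed:
\begin{itemize}
\item $F_2>0$ on the bottom face, for small $\lambda_0$;
\item $F_2<0$ on the top face, for large $\lambda_1$, using the uniform smallness of $\mu(B_\tau(p))$ exactly as in your (b);
\item $\langle\xi,F_1\rangle<0$ on the side $|\xi|=R$. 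This follows from convexity of $r_o$ along the geodesic from $p$ to $x\in\operatorname{spt}\mu$, namely $\langle\nabla r_o(p),\theta_p(x)\rangle\le (r_o(x)-R)/d(p,x)<0$.
\end{itemize}
A zero is then produced by Brouwer's theorem applied to $\zeta\mapsto\operatorname{proj}_C(\zeta+F(\zeta))$, a Poincar\'e--Miranda-type argument.

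The paper's route avoids CAT$(0)$ angle comparison at infinity, corner uniformity, and any $\bar k$-dependent rescaling of $\lambda$, and it treats $\bar k=0$ and $\bar k<0$ identically. Your version gives the clean Hersch/M\"obius picture, but its costs are real. Contrary to your remark that the exact formula is irrelevant, the naive compactification fails for $\bar k=0$. For fixed $\lambda$ one has $z=\lambda r_p(x)/2\ge\lambda(r_o(p)-R)/2\to\infty$, so $F\to(0,-1)$ along the whole side. This is incompatible with $F\to(0,1)$ on the bottom face at their common rim, so the rescaling in your last paragraph is essential, not optional.

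Your observation that the ratio of the $z$-values over $\operatorname{spt}\mu$ tends to $1$ is exactly what gives uniformity in $\lambda$ in (c). The reason is that this ratio is independent of $\lambda$, and the polar angle $2\arctan z$ is $1$-Lipschitz in $\log z$. It is worth stating this explicitly.

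Finally, the boundary map is not a homeomorphism, since it collapses the top and bottom faces to the poles. It does, however, still have degree $\pm1$, which is all the argument needs.
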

% Figure 3: support of the measure and its schematic spherical image.
% The balancing identity is stated in the preceding lemma.
% Complete figure: drawing, sizing, caption, and label.
% Loaded from main.tex; shared styles are in figures/aubin-styles.tex.
\begin{figure}[!htbp]
\centering
\setlength{\abovecaptionskip}{4pt}
\resizebox{.88\linewidth}{!}{%
\begin{tikzpicture}[aubin base]
  \ifdefined\aubinFullPage
    \path[use as bounding box] (0,1.25) rectangle (16,5.40);
  \fi

  % Align the panels vertically without changing their proportions.
  \begin{scope}[yshift=.65cm]
  \draw[aubin outline]
    (1.00,.81) .. controls (.72,.87) and (1.07,2.62) .. (2.24,3.69)
    .. controls (3.38,4.36) and (6.28,3.82) .. (7.35,3.04)
    .. controls (6.60,2.56) and (6.16,1.97) .. (5.90,1.46)
    .. controls (4.39,1.89) and (2.00,1.23) .. (1.00,.81) -- cycle;
  \node[aubin label] at (2.40,3.20) {$Z$};

  % A modestly wider, thicker support region, with a light fill for legibility.
  % Move the support and its base point together to the center of Z.
  \begin{scope}[shift={(-.10,.40)}]
  \begin{scope}[shift={(3.94,2.25)},xscale=1.15,yscale=1.75,shift={(-3.94,-2.25)}]
    \draw[aubin blue,fill=aubinBlue!7]
      (2.62,2.06) .. controls (3.12,2.51) and (4.29,2.47) .. (5.01,2.37)
      .. controls (5.29,2.34) and (5.48,2.16) .. (5.22,2.16)
      .. controls (4.37,2.25) and (3.22,2.13) .. (2.62,2.06) -- cycle;
  \end{scope}
  \node[aubin label,text=aubinBlue,anchor=south west,font=\large] at (5.08,2.63)
    {$\operatorname{spt}\mu$};
  \fill[aubinRed] (3.94,2.25) circle[radius=2pt];
  \node[aubin label,text=aubinRed,anchor=north] at (3.94,1.99) {$p$};
  \end{scope}
  \end{scope}

  \draw[aubin map] (8.02,3.60) -- (9.30,3.60);
  \node[aubin label,anchor=south] at (8.66,3.78) {$\Phi_{p,\lambda}$};

  % Sphere with a blue cap-shaped image of the support.
  \begin{scope}[yshift=-.40cm]
  \coordinate (c) at (12.30,3.67);
  \def\sphereradius{1.92}
  \def\equatorradius{.72}
  \def\supportimageedge{
      (10.62,4.66)
      .. controls (10.81,4.55) and (10.96,4.90) .. (11.27,4.99)
      .. controls (11.58,5.36) and (12.22,5.48) .. (12.66,5.28)
      .. controls (12.88,5.05) and (13.12,5.14) .. (13.36,4.96)
      .. controls (13.57,4.76) and (13.76,4.73) .. (13.91,4.90)}
  % Let the sphere outline determine the bounds of the clipped fill.
  \begin{scope}[overlay]
    \clip (c) circle[radius=\sphereradius];
    \path[fill=aubinBlue!22] \supportimageedge
      -- (14.35,6.08) -- (10.20,6.08) -- cycle;
    \draw[aubin blue] \supportimageedge;
  \end{scope}
  \draw[aubin outline] (c) circle[radius=\sphereradius];
  \draw[aubin hidden]
    (10.38,3.67) arc[start angle=180,end angle=0,x radius=\sphereradius,y radius=\equatorradius];
  \draw[aubin outline]
    (10.38,3.67) arc[start angle=180,end angle=360,x radius=\sphereradius,y radius=\equatorradius];
  % Mark the north pole at the exact top of the schematic sphere.
  \coordinate (northpole) at (12.30,{3.67+\sphereradius});
  \fill[aubinRed] (northpole) circle[radius=2.7pt];
  \node[aubin label,text=aubinRed,inner sep=1pt] (pole-label)
    at (12.15,4.78) {$\Phi_{p,\lambda}(p)$};
  \node[aubin label,text=aubinBlue,anchor=east] at (11.08,5.30)
    {$\Phi_{p,\lambda}(\operatorname{spt}\mu)$};
  \end{scope}

\end{tikzpicture}%
}
\caption{The support of $\mu$ in $Z$ and its image under $\Phi_{p,\lambda}$, as in Lemma~\ref{lem:balance}.}
\label{fig:measure-support}
\end{figure}

\subsection{A lower bound for $\mathcal{Q}$}
We prove a preliminary lower bound for $\mathcal{Q}$ in terms of the function $f$ from the previous subsection. Here $v$ will be compactly supported and Lipschitz. First, we rewrite $\mathcal{Q}$ as follows:
\begin{equation*}
\begin{aligned}
    \mathcal{Q}(v) &= \int \left((q-2)\left |\nabla_\Sigma v - \frac{n-3}{2}v\nabla_\Sigma \log f\right|^2 + (n-1)(H_\Sigma^2 + \bar k)v^2 \right.\\
    &\qquad + \left.(n-3)(q-2)\langle \nabla_\Sigma {v}, v\nabla_\Sigma\log f\rangle - \frac{(n-3)^2(q-2)}{4}v^2|\nabla_\Sigma\log f|^2\right).
\end{aligned}
\end{equation*}

Next, integrate by parts with $\Delta_\Sigma \log f$ and use {Lemma~\ref{lem: prop of f}(ii)} (the weak inequality can be tested against $v^2$, which is only Lipschitz, by using nonnegative {$C^1_c$} approximations in charts) to get
\begin{equation*}
\begin{aligned}
    \int 4\langle \nabla_\Sigma {v}, v\nabla_\Sigma\log f\rangle &\geq \int \left[ (n-3)|\nabla_\Sigma \log f|^2 + (n-1)(-\bar k + f^2)\right. \\
    &\qquad \left. + (n-1)(\partial_{\nu_\Sigma}\log f)^2 + 2(n-1)H_\Sigma\partial_{\nu_\Sigma}\log f \right]v^2
\end{aligned}
\end{equation*}

A direct algebraic manipulation of the last two displayed inequalities together with $q-2 = 4/(n-3)$ yields
\begin{equation}\label{eq:square}
\mathcal{Q}(v)\ge\int_\Sigma(n-1)v^2\left(f^2+(H_\Sigma+\partial_{\nu_\Sigma}\log f)^2\right)+(q-2)\left|\nabla_\Sigma v
-\frac{n-3}{2}v\nabla_\Sigma\log f\right|^2.
\end{equation}

Since all the coefficients in \eqref{eq:square} are bounded on a neighborhood of \(\overline U\), given \(v\in W^{1,2}_0(U)\), choose {\(v_j\in C_c^1(U)\)} converging strongly to \(v\) in \(W^{1,2}(U)\). Extending these functions by zero outside \(U\), we may apply \eqref{eq:square} to each \(v_j\). Passing to the limit proves \eqref{eq:square} for the zero extension of \(v\).

\section{Proof of Theorem~\ref{thm:smooth CH}}

Assume $n\ge4$, we prove the nonsmooth version of \Cref{thm:smooth CH} under a lower bound.

\begin{theorem}\label{thm:main}
%Let $(Z,d)$ be an non-collapsing Alexandrov space with no Alexandrov boundary and curvature bounded from above by $\bar k \leq 0$ and from below by $\underline k$. 
We have
\begin{equation}\label{eq:critical}
\ip{Z}{d}(v) \geq \ipm{\bar k}(v) \ \text{for every} \ v\geq 0.
\end{equation}
Moreover, if $\Omega \subset Z$ is a finite perimeter set with positive and finite volume and $P_{g}(\Omega) = \ipm{\bar k}(\vol_g(\Omega))$, then $(\Omega,d)$ is isometric to a geodesic ball in $\mathbb M^n_{\bar k}$.
\end{theorem}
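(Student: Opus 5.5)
The plan is to follow the reduction of \cite[Proposition~8.1]{ADNRdiff}. That reduction shows the inequality \eqref{eq:critical} holds for small volumes. It also shows that if \eqref{eq:critical} fails for some volume, there is a first failure volume $v_0>0$. At $v_0$ there is an isoperimetric region $\Omega$, possibly in a limit space $Z_\infty$ of the same class obtained by the compactness arguments of \cite{ADNRdiff}, with $P(\Omega)=\ipm{\bar k}(v_0)$. Consequently both the inequality and the rigidity statement reduce to one claim: \emph{an isoperimetric region $\Omega$ in such a $Z$ attaining equality with the model profile is isometric to a model ball}. Isoperimetric regions here have boundaries that are $C^{2}$ off a singular set of codimension at least $8$, and they have constant mean curvature $H$. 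I would first treat the case of a regular $C^2$ boundary $\Sigma=\partial\Omega$, and then handle the singular set by the usual cutoff argument using capacity-zero sets, which is harmless because $W^{1,2}_0$ test functions are allowed in Theorem~\ref{prop:sob}.

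The core of the argument is the sharp Michael--Simon inequality combined with balancing. Let $\mu$ be normalized $\cH^{n-1}$ on $\Sigma$; it is compactly supported and has no atoms, so Lemma~\ref{lem:balance} provides $p,\lambda$ with $\int_\Sigma \Phi_{p,\lambda}\,d\mu=0$. Write $\Phi=(\Phi^1,\dots,\Phi^{n+1})$, so that $\sum_i(\Phi^i)^2=1$. Apply \eqref{eq:sob} to each $v=\Phi^i$ and sum over $i$. On the right-hand side, Minkowski in $L^{q/2}$ gives $\sum_i\|\Phi^i\|_q^2\ge\|\sum_i(\Phi^i)^2\|_{q/2}=|\Sigma|^{2/q}$. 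On the left-hand side we get $(q-2)\int|d_\Sigma\Phi|^2+(n-1)\int(H^2+\bar k)\le(q-2)(n-1)\int f^2+(n-1)(H^2+\bar k)|\Sigma|$, using \eqref{eq:dPhi}. The balancing is what makes the $\Phi^i$ admissible as "first eigenfunction-like" test functions. Separately, the stability (second variation) of $\Omega$, tested against the mean-zero functions $\Phi^i$, gives $\int|d_\Sigma\Phi|^2\ge\int(|A|^2+\mathrm{Ric}(\nu,\nu))$, a Hersch-type bound. To avoid Ricci terms in the Alexandrov setting, I would instead use the lower bound \eqref{eq:square} for $\mathcal Q$ in terms of $f$ and $H+\partial_\nu\log f$.

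These estimates, combined with the equality $P(\Omega)=\ipm{\bar k}(v_0)$ and the comparison relating $H$ and $|\Sigma|$ for the model profile (the derivative of $\ipm{\bar k}$ gives $(n-1)H\le(\ipm{\bar k})'(v_0)$ via one-sided variations), should force every inequality in the chain to be an equality. In particular the following would all hold with equality:
\[
(H+\partial_\nu\log f)^2=0,\qquad \nabla_\Sigma\Phi^i=\tfrac{n-3}{2}\Phi^i\nabla_\Sigma\log f,\qquad |d_\Sigma\Phi|^2=(n-1)f^2.
\]
I expect this step to be the main obstacle: I need to arrange the test functions so that the sum of the $\mathcal Q(\Phi^i)$ reproduces exactly the model identity. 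In the model case these give equality, since geodesic spheres about $p$ with $\lambda$ chosen suitably make $\Phi|_\Sigma$ a conformal map onto a round sphere. I would try the first choice of $v=\Phi^i f^{(n-3)/2}$-type weights suggested by the square in \eqref{eq:square}.

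From the equalities I would then extract rigidity. Equality $|d_\Sigma\Phi|=\sqrt{n-1}f$ means $\Psi_p$ is isometric along $\Sigma$. The identity $\partial_\nu\log f=-H$ is constant, which forces $r_p$ to be constant on $\Sigma$, so $\Sigma$ is a geodesic sphere about $p$. Equality in the Hessian comparison \eqref{eq:hessr} on $\Sigma$, propagated inward by the Jacobi/Riccati comparison along radial geodesics, gives $\Hess r_p=\frac{s_{\bar k}'}{s_{\bar k}}(g-dr\otimes dr)$ on all of $B_{r}(p)$. Hence $\Psi_p$ restricted to $\Omega=B_r(p)$ is an isometry onto a model ball.
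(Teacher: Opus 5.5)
There is a genuine gap at the step you flag as the main obstacle, and the mechanism you propose cannot close it.

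After your Minkowski step, summing \eqref{eq:sob} over $v=\Phi^i$ gives
\[
(q-2)\int_\Sigma|d_\Sigma\Phi|^2+(n-1)(H^2+\bar k)|\Sigma|\ \ge\ c_n|\Sigma|^{2/q}.
\]
This is already implied by \eqref{eq:sob} for $v\equiv1$. The Dirichlet energy of $\Phi$ sits on the large side, so inserting the upper bound \eqref{eq:dPhi} forces nothing. Now \eqref{eq:square} at $v\equiv1$ bounds $(n-1)\int_\Sigma f^2$ plus the defect terms from above by $\mathcal Q(1)$. So what you need is the \emph{lower} bound $\int_\Sigma|d_\Sigma\Phi|^2\ge(n-1)(H^2+\bar k)|\Sigma|=\mathcal Q(1)$.

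Stability of $\Omega$ cannot supply this under an upper curvature bound: the term $\mathrm{Ric}(\nu,\nu)\le(n-1)\bar k$ has the wrong sign. Weights like $\Phi^i f^{(n-3)/2}$ do not help either. The equality $\nabla_\Sigma\Phi^i=\frac{n-3}{2}\Phi^i\nabla_\Sigma\log f$ you expect fails for the model ball itself. There the balanced map is centered at the origin with $\Phi(\partial B_R)$ the equator, so $f$ is constant on $\Sigma$ while $\Phi^1,\dots,\Phi^n$ are not. No chain with that equality condition can be sharp.

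The missing idea is to first make $v\equiv1$ an extremal of \eqref{eq:sob} and then take its second variation.
\begin{enumerate}
\item Apply \eqref{eq:sob} to $v\equiv1$, with $P(\Omega)$ equal to the model perimeter. This gives $H^2+\bar k\ge H_{\bar k}^2+\bar k$.
\item Combine this with $0<H\le H_{\bar k}$. The upper bound comes from inward variations and uses that the model inequality holds for all volumes up to $V$; positivity comes from $\operatorname{Hess}_g(r_o^2/2)\ge g$. Together these force $H=H_{\bar k}$ and equality in \eqref{eq:sob} at $v\equiv1$.
\item Let $w_j$ be the balanced coordinates of $\Phi$, which have mean zero on $\Sigma$. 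Expanding \eqref{eq:sob} for $1+tw_j$ to order $t^2$ gives the Poincar\'e inequality $\int_\Sigma|\nabla_\Sigma w_j|^2\ge(n-1)(H_{\bar k}^2+\bar k)\int_\Sigma w_j^2$. Summing over $j$ gives exactly the required lower bound.
\end{enumerate}

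Then \eqref{eq:square} at $v\equiv1$ and \eqref{eq:dPhi} force both $\nabla_\Sigma\log f=0$ and $\partial_\nu\log f=-H_{\bar k}$. You need both, since constancy of $\partial_\nu\log f$ alone does not give $\nu=\nabla r_p$. The radii on different components are then matched because $w_{n+1}$ is affine in $f$ with zero mean, hence $w_{n+1}\equiv0$.

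Finally, propagating Hessian equality inward by a Riccati argument is not available with a merely $C^{1,\alpha}$ metric. The paper instead observes that $F=\Psi_p^{-1}$ on $B_R^{\delta^{\bar k}}$ has Jacobian at least $1$, while $\operatorname{vol}_g(\Omega)=\operatorname{vol}(B_R^{\delta^{\bar k}})$. Hence $J_F=1$ almost everywhere, and an approximation along nearby model geodesics upgrades this to an isometry.
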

\begin{remark}
In \cite[Theorem~1.1]{OpenAI2026CAT}, OpenAI showed the sharp Euclidean isoperimetric inequality for integral fillings in proper $\mathrm{CAT}(0)$ spaces, without a lower curvature bound. The equality case is not established there. This has \eqref{eq:critical} as a direct corollary when $\bar k=0$.
\end{remark}

By \cite[Proposition~8.1 and Theorem~1.3]{ADNRdiff}, to obtain Theorem~\ref{thm:main} and also Theorem~\ref{thm:smooth CH}, it suffices to prove the proposition below.

For $r\geq 0$, we denote by $V_{\bar k}(r)$ the volume of $B_r^{\delta^{\bar k}}$ in $(\mathbb M^n_{\bar k}, \delta^{\bar k})$. For {$V > 0$}, put $R_V:=V_{\bar k}^{-1}(V)$ as the radius of the ball of volume $V$ and $H_{\bar k {,V}}:=s_{\bar k}'(R_V)/s_{\bar k}(R_V)$ as its mean curvature. {Once $V$ is fixed, we abbreviate $R := R_V$ and $H_{\bar k} := H_{\bar k{,V}}$.}

\begin{proposition}\label{prop: equality case+first failure point}
Assume that, for some $V>0$, it holds
\begin{equation}\label{assump ineq}
\ip{Z}{d}(v) \geq \ipm{\bar k}(v), \ \text{for every} \ v \in (0,V].
\end{equation}
If $\Omega \subset Z$ is a finite perimeter set such that $\vol_g(\Omega) = V$ and $P_{g}(\Omega) = \ipm{\bar k}(V)$, then $(\Omega,d)$ is isometric to $(B_R^{\delta^{\bar k}},\delta^{\bar k})$.
\end{proposition}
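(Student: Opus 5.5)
Under the hypothesis \eqref{assump ineq}, the set $\Omega$ is an isoperimetric region of volume $V$. The plan is to show it has a smooth boundary with constant mean curvature, to feed the boundary into the Michael--Simon machinery of the preliminaries with a balanced map, and to read rigidity off the equality case of \eqref{eq:square}.

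The first step is regularity. By the regularity theory in \cite{ADNRdiff}, since $Z$ carries a $C^{1,\alpha}$ metric in a $C^{3,\alpha}$ atlas, $\Omega$ (after normalization) is bounded. Its reduced boundary $\Sigma$ is a $C^{2}$ hypersurface with constant normalized mean curvature $H_\Sigma\equiv H$, away from a closed singular set of codimension at least $7$ (we have $n\geq 4$). I would first treat $\Sigma$ as if it were regular and then handle the singular set by cutoffs, using its codimension. Because $\ip{Z}{d}\ge\ipm{\bar k}$ on $(0,V]$ with equality at $V$, comparing with inward variations gives $H\ge H_{\bar k}$. Only a one-sided derivative of the profile is available there, since the profile dominates the model only to the left of $V$. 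From the model, this yields $H^2+\bar k\ge H_{\bar k}^2+\bar k=1/s_{\bar k}(R)^2$.

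The second step is balancing and the test function. Apply Lemma~\ref{lem:balance} to the normalized measure $\mu=\cH^{n-1}\llcorner\Sigma/P(\Omega)$, which has no atoms, to get $p,\lambda$ with $\int_\Sigma\Phi_{p,\lambda}=0$. I would then use $v=f^{(n-3)/2}$, which makes the gradient square in \eqref{eq:square} vanish, or else the coordinate functions $v=\Phi^i$ in a Hersch-type argument. With the Hersch choice, summing $\mathcal{Q}(\Phi^i)$ over $i$ and using $\sum|\Phi^i|^2=1$ together with \eqref{eq:dPhi} controls $\sum\int|\nabla\Phi^i|^2\le (n-1)\int f^2$. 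Combining this with the inequality \eqref{eq:sob} of Theorem~\ref{prop:sob} and with \eqref{eq:square} applied to $v=f^{(n-3)/2}$, one gets $\int_\Sigma f^{n-1}\ge \omega_{n-1}$ from the sphere-area bound via degree (the image of $\Phi$ covers $\mathbb S^{n}$ appropriately). The aim is a chain of the form
\[
(n-1)(H^2+\bar k)P(\Omega)\;\ge\;\cdots\;\ge\;c_n P(\Omega)^{(n-3)/(n-1)},
\]
which, together with $P(\Omega)=\ipm{\bar k}(V)$ and the model identity $c_n\,\ipm{\bar k}(V)^{-2/(n-1)}=(n-1)/s_{\bar k}(R)^2$, forces equality everywhere.

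The third step is rigidity from equality. Equality in \eqref{eq:square} forces $H_\Sigma+\partial_\nu\log f=0$. It also forces equality in Lemma~\ref{lem: prop of f}(ii), which traces back to equality in the Hessian comparison \eqref{eq:hessr} with the upper bound $\bar k$ along $\Sigma$, and equality in \eqref{eq:dPhi}. These say that $\Sigma$ is a geodesic sphere $\partial B_R(p)$ and that $\Psi_p$ is an isometry on tangent planes along $\Sigma$. Since the curvature is bounded above by $\bar k$, I would propagate this inward. Equality of the area of $\partial B_r(p)$ with the model at $r=R$, combined with the monotone comparison of area ratios, gives equality for all $r\le R$, hence $\vol(B_R(p))=V_{\bar k}(R)$. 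Then a standard $\mathrm{CAT}(\bar k)$ volume-rigidity argument makes $\Psi_p|_{B_R(p)}$ an isometry onto $B_R^{\delta^{\bar k}}$, and $\Omega=B_R(p)$ up to null sets.

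I expect the main obstacle to be the equality analysis. One difficulty is making the weak equality statements hold pointwise when $f$ and $r_p$ are only $C^{1,1}$ and $\Sigma$ may have a singular set. The other is checking that equality in the Sobolev inequality \eqref{eq:sob} genuinely pins down $\Sigma$ as a geodesic sphere rather than only giving integral identities. For the first point I would try the approximations of Lemma~\ref{lem:restriction}(ii).
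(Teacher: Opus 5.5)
There is a genuine gap, and it begins with the sign in your first step. Inward variations give $H\le H_{\bar k}$, not $H\ge H_{\bar k}$. For an inward variation with $\vol_g(X_t\Omega)=V-t+o(t)$ one has $P_g(X_t\Omega)=P_g(\Omega)-(n-1)Ht+o(t)$. Applying \eqref{assump ineq} at the admissible volume $V-t$ gives $P_g(\Omega)-(n-1)Ht\ge\ipm{\bar k}(V)-(n-1)H_{\bar k}t+o(t)$, whence $H\le H_{\bar k}$. The bound $H\ge H_{\bar k}$ would require outward variations, i.e.\ the hypothesis at volumes above $V$, which you note yourself is unavailable. With your sign, the variational bound and \eqref{eq:sob} at $v\equiv1$ both give the same lower bound $H^2+\bar k\ge s_{\bar k}(R)^{-2}$. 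Your chain therefore has no upper end and forces nothing. The correct mechanism combines three facts:
\begin{itemize}
\item $H\le H_{\bar k}$, from inward variations;
\item $H>0$, which needs a separate argument, e.g.\ the first variation of area against a smoothing of $\nabla(r_o^2/2)$, using \eqref{eq:hessdistance};
\item $H^2\ge H_{\bar k}^2$, from \eqref{eq:sob} with $v\equiv1$.
\end{itemize}
Together these give $H=H_{\bar k}$, so the constant function is an \emph{extremal} of \eqref{eq:sob}.

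The second missing idea is how to exploit that extremality. Neither $v=f^{(n-3)/2}$ nor summing \eqref{eq:sob} over the coordinates of $\Phi$ does it. The bound $\int_\Sigma f^{n-1}\ge\omega_{n-1}$ ``via degree'' is unjustified: the $(n-1)$-dimensional image $\Phi(\Sigma)$ cannot cover $\mathbb S^n$, and balancing gives no lower bound on its area. What works is the second variation of $\mathcal Q(v)/\|v\|_{L^q}^2$ at $v\equiv1$ in the balanced directions. Since $H$ is constant and $\int_\Sigma w_j=0$, you have $\mathcal Q(1+tw_j)=\mathcal Q(1)+t^2\mathcal Q(w_j)$. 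Expanding $\|1+tw_j\|_{L^q}^2$ to order $t^2$ then yields $\|\nabla_\Sigma w_j\|_{L^2}^2\ge(n-1)(H_{\bar k}^2+\bar k)\|w_j\|_{L^2}^2$, and summing over $j$ gives $\int_\Sigma|d_\Sigma\Phi|^2\ge\mathcal Q(1)$. Comparing this with \eqref{eq:square} at $v=1$ and with \eqref{eq:dPhi} kills the remainder terms. This gives $\nabla_\Sigma\log f=0$ (using $n-3>0$) and $\partial_\nu\log f=-H_{\bar k}$ pointwise on $\mathrm{reg}(\Sigma)$, since these integrands are continuous.

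This first-order information suffices, so you never need pointwise equality in the weak Hessian comparison. It gives $\nu=\nabla r_p$, hence $r_p$ is locally constant on $\Sigma$. You also skip why the constants agree across components and equal $R$. The coordinate $w_{n+1}$ is an affine function of $f$, so it has zero tangential gradient and zero mean. The Poincar\'e inequality above then forces $w_{n+1}\equiv0$, i.e.\ $z(r_p)=1$ and $f=1/s_{\bar k}(r_p)$ on $\Sigma$. The equality $\int_\Sigma f^2=(H_{\bar k}^2+\bar k)P_g(\Omega)$ then gives $r_p=R$. Finally, $\vol_g(B_R(p))=V=V_{\bar k}(R)$ is immediate from the hypothesis, so the area-ratio monotonicity is unnecessary. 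Your closing Jacobian argument for $\Psi_p^{-1}$ is the right one.
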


Before proving the proposition, we need the following lemma. 

\begin{lemma}\label{lem:H}
Under the assumptions of Proposition~\ref{prop: equality case+first failure point}, we have $0<H_{\partial \Omega}\le H_{\bar k}$.
\end{lemma}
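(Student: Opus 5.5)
We will show that $\Omega$ is a volume-constrained minimizer of perimeter at volume $V$, obtain regularity and constant mean curvature $H_{\partial\Omega}$ for its boundary, and then compare with the model profile using first-variation arguments.

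\emph{Step 1: $\Omega$ is isoperimetric.} By \eqref{assump ineq} at $v=V$ we have $P_g(\Omega)=\ipm{\bar k}(V)\le \ip{Z}{d}(V)\le P_g(\Omega)$. Hence $\Omega$ is an isoperimetric region. Invoking the regularity theory recalled in \cite{ADNRdiff} (the metric $g$ is $C^{1,\alpha}$ in a $C^{3,\alpha}$ atlas), we take $\partial\Omega$ to be a $C^2$ hypersurface with constant normalized mean curvature $H_{\partial\Omega}$, computed with respect to the outward normal; in low dimensions this may hold only away from a closed singular set of codimension at least $8$, which will not affect integral arguments. We also use boundedness of $\Omega$, available through \cite{ADNRdiff} at a first failure volume.

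\emph{Step 2: upper bound $H_{\partial\Omega}\le H_{\bar k}$.} For small $t>0$, deform $\Omega$ inward by a normal variation to sets $\Omega_t$ with $\vol_g(\Omega_t)=V-t\in(0,V]$. The first variation gives
\[
P_g(\Omega_t)=P_g(\Omega)-(n-1)H_{\partial\Omega}\,t+o(t).
\]
Applying \eqref{assump ineq} yields
\[
P_g(\Omega)-(n-1)H_{\partial\Omega}t+o(t)\ge \ipm{\bar k}(V-t)=\ipm{\bar k}(V)-(\ipm{\bar k})'(V)\,t+o(t).
\]
Since $P_g(\Omega)=\ipm{\bar k}(V)$ and $(\ipm{\bar k})'(V)=(n-1)H_{\bar k}$ (the derivative of $\omega_{n-1}s_{\bar k}(R)^{n-1}$ with respect to $V_{\bar k}(R)$), dividing by $t$ and letting $t\to0^+$ gives $H_{\partial\Omega}\le H_{\bar k}$. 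Only inward (volume-decreasing) variations are used, which is exactly why one-sided information below $V$ suffices.

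\emph{Step 3: positivity, the main obstacle.} Since $\Omega$ is bounded, pick $p\in Z$ and set $\rho:=\max_{\overline\Omega} r_p$, attained at some $x_0\in\partial\Omega$. Near $x_0$, the surface $\partial\Omega$ lies inside the sublevel set $\{r_p\le\rho\}$ and touches $\partial B_\rho(p)$ at $x_0$. By \eqref{eq:hessr}, $\Hess r_p\ge \frac{s'_{\bar k}}{s_{\bar k}}(g-dr_p\otimes dr_p)$, so the distance sphere is mean-convex with mean curvature at least $s'_{\bar k}(\rho)/s_{\bar k}(\rho)>0$, since $\bar k\le0$. To make the comparison rigorous despite $r_p$ being only $C^{1,1}$, apply Lemma~\ref{lem:restriction}(i) with $u=r_p$ on $\Sigma=\partial\Omega$. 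At the maximum $x_0$ we have $\nabla_\Sigma r_p=0$ and $\partial_\nu r_p=1$. The restricted inequality, reversed via the "Moreover" clause since $\Hess r_p\ge B$, gives
\[
\Delta_\Sigma r_p\ge \operatorname{tr}_{T\Sigma}B-(n-1)H_{\partial\Omega}.
\]
A weak maximum principle near $x_0$ then forces $(n-1)H_{\partial\Omega}\ge \operatorname{tr}B>0$. The delicate point is that these inequalities hold only weakly or almost everywhere, so pointwise evaluation at $x_0$ is not directly justified. I would handle this with the $C^{3,\alpha}$ approximations of Lemma~\ref{lem:restriction}(ii): replace $r_p$ by $u_\varepsilon$ with $\Hess u_\varepsilon\ge B-\delta_\varepsilon g$, take the maximum of $u_\varepsilon|_{\partial\Omega}$ near $x_0$, apply the classical pointwise comparison there, and let $\varepsilon\to0$. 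Should a singular set intersect the region, we would instead move $p$ generically or use the measure-theoretic maximum principle for stationary varifolds with bounded mean curvature.
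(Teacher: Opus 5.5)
Your Steps 1--2 follow the paper. For Step 2, localize the inward variation near a regular point with a cutoff vector field, as the paper does; a global normal variation is not available when $\mathrm{Sing}(\partial\Omega)\neq\emptyset$. Step 3 is a different route from the paper's, and it is sound when the farthest point $x_0$ lies in $\mathrm{reg}(\partial\Omega)$. If $H_{\partial\Omega}<s'_{\bar k}(\rho)/s_{\bar k}(\rho)$, the reversed Lemma~\ref{lem:restriction}(i) gives $\Delta_{\partial\Omega}r_p\ge c>0$ weakly near $x_0$, since $\partial_\nu r_p\approx 1$ and $\operatorname{tr}_{T\partial\Omega}B\approx(n-1)s'_{\bar k}(\rho)/s_{\bar k}(\rho)$ there. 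The strong maximum principle for weak subsolutions then contradicts the interior maximum, so you even get $H_{\partial\Omega}\ge s'_{\bar k}(\rho)/s_{\bar k}(\rho)>0$.

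\textbf{The gap is the singular case.} It arises for $n\ge 8$, not ``in low dimensions'' as you wrote, and neither of your fallbacks closes it.
\begin{enumerate}
\item Moving $p$ generically is not an argument by itself. A boundary point can be the farthest point from every $p$ in an open set (think of a corner), so the smallness of $\mathrm{Sing}(\partial\Omega)$ alone yields no genericity statement.
\item A maximum principle for varifolds with bounded mean curvature only sees $|H_{\partial\Omega}|$. At best it gives $|H_{\partial\Omega}|\ge s'_{\bar k}(\rho)/s_{\bar k}(\rho)$, not the sign you need.
\end{enumerate}
The missing idea is that a point where $\Omega$ is touched from outside is regular. Since $\Omega\subset\{r_p\le\rho\}$ and $r_p$ is $C^1$ with unit gradient at $x_0$, every tangent cone of the isoperimetric set at $x_0$ is a perimeter-minimizing cone contained in the half-space $\{\langle\cdot,\nabla r_p(x_0)\rangle\le 0\}$, hence equal to it. The density at $x_0$ is then one, and $\varepsilon$-regularity gives $x_0\in\mathrm{reg}(\partial\Omega)$. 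With this added, your argument is complete.

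The paper avoids the issue with an integral argument. It uses the first-variation identity
$\int_{\partial\Omega}\operatorname{div}_{\partial\Omega}X\,d\cH^{n-1}_g=(n-1)H_{\partial\Omega}\int_{\partial\Omega}\langle X,\nu_{\partial\Omega}\rangle\,d\cH^{n-1}_g$,
which holds for every $X\in C^1_c$ despite the singular set. It tests this on a cut-off smoothing of $\nabla(r_o^2/2)$. By \eqref{eq:hessdistance} and Lemma~\ref{lem:restriction}(ii), the tangential divergence is at least $(n-1)(1-\delta_\varepsilon)$. By the divergence theorem, the flux equals $\int_\Omega\Delta_g(r_o^2/2)_\varepsilon\,d\cH^n_g\ge n(1-\delta_\varepsilon)V>0$, which forces $H_{\partial\Omega}>0$. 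That route uses only the convexity $\operatorname{Hess}_g(r_o^2/2)\ge g$ and needs no regularity at any particular point. Your pointwise route buys a quantitative lower bound on $H_{\partial\Omega}$ in terms of the circumradius about $p$, at the price of the touching-point regularity. Boundedness, which both arguments need, comes from \cite[Theorem~1.4]{APP}, which applies because of the lower curvature bound.
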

\begin{proof}
By \cite[Theorem~1.4]{APP}, we take a bounded open representative of $\Omega$ which, by \cite[Corollaries~3.7--3.8]{Morgan2003}, has singular set of Hausdorff dimension at most $n-8$ and then $P_g(\Omega)=\cH_g^{n-1}(\mathrm{reg}(\partial\Omega))$. On the regular set of $\partial\Omega$, the mean curvature $H_{\partial\Omega}$ is constant.
Fix $o\in Z$. By \eqref{eq:hessdistance}, the scalar function $r_o^2/2$ has weak Hessian at least $g$. By a standard mollification argument, cf. Lemma~\ref{lem:restriction}(ii), on a compact neighborhood of $\overline\Omega$, we obtain {$C^{3,\alpha}$} functions $(r_o^2/2)_\varepsilon$ converging to $r_o^2/2$ in $C^1$, and numbers $\delta_\varepsilon\to0$ such that $\operatorname{Hess}_g(r_o^2/2)_\varepsilon\ge(1-\delta_\varepsilon)g$ on that neighborhood. Consequently,
\[
\operatorname{div}_{\mathrm{reg}(\partial \Omega)}\nabla(r_o^2/2)_\varepsilon
=\operatorname{tr}_{T{\mathrm{reg}(\partial \Omega)}}\operatorname{Hess}_g(r_o^2/2)_\varepsilon
\ge (n-1)(1-\delta_\varepsilon),
\
\Delta_g(r_o^2/2)_\varepsilon\ge n(1-\delta_\varepsilon).
\]

Multiplying $\nabla( r_o^2/2)_\varepsilon$ by a compactly supported cutoff that equals one near $\overline\Omega$ produces a $C_c^1$ vector field $X$. Therefore, the first variation of area and Gauss--Green formula give
\begin{align*}
H_{\partial \Omega}\int_{\partial \Omega}\langle\nabla(r_o^2/2)_\varepsilon,\nu_{\partial \Omega}\rangle\,d\cH^{n-1}_g
&\ge (n-1)(1-\delta_\varepsilon)P_g(\Omega),\\
\int_{\partial \Omega}\langle\nabla(r_o^2/2)_\varepsilon,\nu_{\partial \Omega}\rangle\,d\cH^{n-1}_g
&=\int_\Omega\Delta_g(r_o^2/2)_\varepsilon\,d\cH^{n}_g
\ge n(1-\delta_\varepsilon)V.
\end{align*}
Taking the limit as $\varepsilon\to 0$ yields
\begin{equation*}\label{eq:positiveflux}
    H_{\partial \Omega}\int_{\partial \Omega}\langle\nabla(r_o^2/2),\nu_{\partial \Omega}\rangle\,d\cH^{n-1}_g\ge (n-1)P_g(\Omega) 
\end{equation*}
and
\begin{equation*}
\int_{\partial \Omega}\langle\nabla(r_o^2/2),\nu_{\partial \Omega}\rangle\,d\cH^{n-1}_g\ge nV.
\end{equation*}
Since $P_g(\Omega)>0$ and $V>0$, we get $H_{\partial \Omega}>0$.

Take $x\in \mathrm{reg}(\partial \Omega)$, extend the outward normal from $B_{r_x}^g(x)\cap\mathrm{reg}(\partial \Omega)$ to a $C^1$ ambient vector field $\tilde \nu_{\partial \Omega}$ on the same set. Let $\phi$ be a nonnegative cutoff with positive integral on $\partial \Omega$ and supported in $B^g_{r_x}(x)$. Define $X\in {\mathfrak{X}}_c^1(Z)$ by $X:= -\phi \tilde \nu_{\partial \Omega}/ c|\tilde \nu_{\partial \Omega}|_g$ where $c := \int_{\mathrm{reg}(\partial\Omega)}\phi d\cH^{n-1}_g {> 0}$, then we have 
\[
\int_{\mathrm{reg}(\partial \Omega)}\langle X,\nu_{\partial \Omega}\rangle\,d\cH^{n-1}_g = {-\frac{1}{c}}\int_{\mathrm{reg}(\partial \Omega)\cap \spt\phi}\phi \,d\cH^{n-1}_g = -1.
\]
Let $X_t$ be its flow. Volume variation, the first variation of the area, {and our assumption,} give
\begin{equation}\label{eq:inwardvariation}
V_t := \vol_g(X_t\Omega)=V-t+o(t),
\qquad
P_g(X_t\Omega)=\ipm{\bar k}(V)- {(n-1)}H_{\partial\Omega}t+o(t).
\end{equation}
Therefore, \eqref{assump ineq} applies to $X_t\Omega$ for small $t$, hence $$
P_g(X_t\Omega)\ge \ip{Z}{d}(V_t)
\ge \ipm{\bar k}(V_t).$$

{Since $\ipm{\bar k}'(V) = (n-1)H_{\bar k}$, Taylor expansion, and \eqref{eq:inwardvariation}, give}
\[
\ipm{\bar k}(V_t)=\ipm{\bar k}(V)- {(n-1)}H_{\bar k}t+o(t).
\]
Combining the last two displayed equations with \eqref{eq:inwardvariation} and dividing by ${(n-1)}t>0$, we obtain
$-H_{\partial \Omega}+o(1)\ge-H_{\bar k}+o(1)$.
Letting $t\downarrow0$ concludes the proof.
\end{proof}

\begin{proof}[Proof of Proposition~\ref{prop: equality case+first failure point}]
Take the bounded open representative of $\Omega$ as in the proof of Lemma~\ref{lem:H}. %We record here that $\Omega$ must be connected. Indeed, if $\Omega = \Omega_1\sqcup \Omega_2$ with $\cH^n_d(\Omega_i)=v_i>0$, by the subadditivity of $\ipm{\bar k}$ {\cite[Lemma~3.3]{ADNRdiff}}, we obtain
%$$ P_d(\Omega) = P_d(\Omega_1)+P_d(\Omega_2) \overset{\eqref{assump ineq}}{\geq} \ipm{\bar k}(v_1)+\ipm{\bar k}(v_2) > \ipm{\bar k}(v) = P_d(\Omega).$$
%This gives the desired contradiction and then connectedness of $\Omega$.
Since $\Omega$ is an isoperimetric region, {\cite[Corollaries~3.7--3.8]{Morgan2003} gives that} $\mathrm{Sing}(\partial \Omega)$ is of Hausdorff dimension at most $n-8$. Therefore, by a standard capacity argument (using in particular $\cH^{n-2}_g(\mathrm {Sing}(\partial \Omega)) = 0$), we can apply \eqref{eq:sob} with $v=1$ to obtain $
(n-1)(H_{\partial \Omega}^2+\bar k) P_g(\Omega) \ge c_nP_g(\Omega)^{(n-3)/(n-1)}$. Since $P_g(\Omega) = \ipm{\bar k}({V}) = \omega_{n-1} s_{\bar k}(R)^{n-1}$, {$H_{\bar k}=s_{\bar k}'(R)/s_{\bar k}(R)$, and  $(s_{\bar k}')^2+\bar k s_{\bar k}^2=1$}, we obtain
\begin{equation}\label{eq: new}
H_{\partial \Omega}^2+\bar k\ge\left(\frac{\omega_{n-1}}{P_d(\Omega)}\right)^{2/(n-1)}
=s_{\bar k}(R)^{-2}={H_{\bar k}^2}+\bar k.
\end{equation}
Lemma~\ref{lem:H} gives $0<H_{\partial \Omega}\le {H_{\bar k}}$, so the last inequality forces $H_{\partial \Omega}={H_{\bar k}}$. Therefore the constant function attains equality in the Michael-Simon inequality, i.e., 
\begin{equation}\label{eq:extremal}
    \mathcal{Q}(1)=(n-1)(H_{\bar k}^2+\bar k)P_g(\Omega)=c_nP_g(\Omega)^{(n-3)/(n-1)}.
\end{equation}

\medskip

\emph{Step 1: The balancing map.}
Since $\Omega$ is isoperimetric, it is bounded by \cite[Theorem~1.4]{APP}. Then the probability measure $(\cH^{n-1}_g\res \partial \Omega)/ P_g(\Omega)$ is compactly supported and we can apply Lemma~\ref{lem:balance}. Let $p$, $\lambda$, and map $\Phi$, with scalar factor $f$ be as in the lemma. Let $w_1,\ldots,w_{n+1}$ be its coordinates in an orthonormal basis of $T_pZ\oplus\R$. Then
\begin{equation}\label{eqn: zero mean}
    \sum_jw_j^2=1,
\qquad\int_{\partial \Omega} w_j\,d\cH^{n-1}_g=0,
\qquad |w_j|\le1.
\end{equation}

For $|t|<1/2$, the function $1+tw_j$ is positive. Taylor expanding $t\mapsto (1+t\omega_j)^{{q}}$ and $y\mapsto y^{2/q}$, and since the zero mean \eqref{eqn: zero mean} removes the linear term, we get 
\begin{equation}\label{eqn: taylor}
    \|1+tw_j\|_{L^q(\partial \Omega)}^2 =P_g(\Omega)^{2/q}+(q-1)P_g(\Omega)^{2/q-1}t^2 \|w_j\|_{L^2(\partial \Omega)}^2+o(t^2).
\end{equation}
Moreover, a direct computation using {that $H_{\partial \Omega}$ is constant and} \eqref{eqn: zero mean} gives $\mathcal{Q}(1+tw_j)=\mathcal{Q}(1)+t^2\mathcal{Q}(w_j)$. Using this, \eqref{eq:extremal}, \eqref{eqn: taylor}, we apply \eqref{eq:sob} to $1+tw_j$, divide by $t^2$, and let $t\to0$, to obtain
\[
\mathcal{Q}(w_j)\ge(q-1)c_nP_g(\Omega)^{2/q-1}
\|w_j\|^2_{L^2(\partial \Omega)}. 
\]
Recalling that $c_nP_g(\Omega)^{2/q-1}=(n-1)(H_{\bar k}^2+\bar k)$ by \eqref{eq:extremal} and plugging in the definition of $\mathcal{Q}$ imply the Poincar\`e inequality, i.e.,
\begin{equation}\label{eqn: poincare w_j}
\|\nabla_{\partial \Omega} w_j\|_{L^2(\partial\Omega)}^2
\ge (n-1)(H_{\bar k}^2+\bar k)\|w_j\|_{L^2(\partial\Omega)}^2.
\end{equation}

The sum of the squared gradients of the coordinates $w_1,\ldots, w_{n+1}$ is $|d_{\partial \Omega}\Phi|^2$. Using this, the last displayed inequality, and the first assertion in \eqref{eqn: zero mean}, gives
\begin{equation}\label{eq:spectral}
\|d_{\partial \Omega} \Phi\|_{L^2(\partial\Omega)}^2
\ge (n-1)(H_{\bar k}^2+\bar k)P_g(\Omega) \overset{\eqref{eq:extremal}}{=} \mathcal Q(1).
\end{equation}

\medskip

\emph{Step 2: $\Omega$ is a geodesic ball.}
Use \eqref{eq:square} with $v=1$ for this same balanced map $\Phi$, $H_{\partial \Omega} = H_{\bar k}$, and $(q-2)(n-3)^2/4 = n-3$, to get
\[
\begin{split}
    \mathcal{Q}(1)&\ge (n-1)\int_{\partial \Omega} f^2\,d\cH^{n-1}_g +(n-1)\int_{\partial \Omega}(H_{\bar k}+{\partial_{\nu_{\partial \Omega}}}\log f)^2\,d\cH^{n-1}_g \\
    &\quad +(n-3)\int_{\partial \Omega}|\nabla_{\partial \Omega}\log f|^2\,d\cH^{n-1}_g.
\end{split}
\]
Putting \eqref{eq:dPhi} and \eqref{eq:spectral} together yields
\begin{align*}
 0\geq (n-1)\int_{\partial \Omega}(H_{\bar k}+{\partial_{\nu_{\partial \Omega}}}\log f)^2\,d\cH^{n-1}_g +(n-3)\int_{\partial \Omega}|\nabla_{\partial \Omega}\log f|^2\,d\cH^{n-1}_g.
\end{align*}
Since both nonnegative integrals must be zero (because $n\geq 4$), the two integrands are continuous, and the singular set of $\partial \Omega$ is of zero $\cH^{n-1}_g$-measure, we have
\begin{equation}\label{eq:zeros}
\nabla_{\partial \Omega}\log f=0 \ \text{ and }\ 
 {\partial_{\nu_{\partial \Omega}}}\log f=-H_{\bar k}
\quad\text{at every point of }{\mathrm{reg}(\partial \Omega)}.
\end{equation}

Equality in the inequalities above and \eqref{eq:spectral} also give
\begin{equation}\label{eq:farea}
\int_{\partial \Omega} f^2\,d\cH^{n-1}_g=(H_{\bar k}^2+\bar k)P_g(\Omega).
\end{equation}

By definition, the ambient gradient of $\log f$ is zero at $p$, while its normal derivative in \eqref{eq:zeros} is nonzero, and then $p\notin{\mathrm{reg}(\partial \Omega)}$. On $\mathrm{reg}(\partial \Omega)$, we have
\begin{equation}\label{eqn: ambient gradient}
    \nabla\log f=(\partial_r\log f)(r_p)\nabla r_p
\ \text{ and } \ (\partial_r\log f)(r_p)<0.    
\end{equation}
Equation \eqref{eq:zeros} says that the same vector is $-H_{\bar k}\nu_{\partial \Omega}$, by taking the norms, and using $|\nabla r_p|=1$, we obtain $(\partial_r\log f)(r_p)=-H_{\bar k}$ which implies
\begin{equation}\label{eq:radialnormal}
    \nu_{\partial \Omega}=\nabla r_p
\quad\text{on }{\mathrm{reg}(\partial \Omega)}.
\end{equation}
The second assertion in \eqref{eqn: ambient gradient} fixes the outward orientation. 

Equation \eqref{eq:radialnormal} gives that $r_p$ is constant on each connected component of $\mathrm{reg}(\partial \Omega)$. Next, we show that this constant is the same on all components. 

Rearranging the terms in the definition of $f$ and $w_{n+1}$, we see that $w_{n+1} = C_1 f - C_2$ where both constants only depend on $\bar k$ and $\lambda$, namely $C_1 = 2\lambda/(\lambda^2 - \bar k)$ and $C_2 = (\lambda^2 + \bar k)/(\lambda^2 - \bar k)$. Then, by \eqref{eq:zeros}, $\nabla_{\partial \Omega}w_{n+1} = 0$ and, by \eqref{eqn: poincare w_j}, $w_{n+1} \equiv 0$. This gives $z(r_p) = 1$ by definition of $\Phi$. Therefore, $r_p$ is constant on $\mathrm{reg}(\partial \Omega)$. 

Since $s_{\bar k}(r_p) = 2s_{\bar k}(r_p/2)s_{\bar k}'(r_p/2)$, by $z(r_p) =1$ and the definitions of $z$ and $f$, we get $f(r_p) = 1/s_{\bar k}(r_p)$. By \eqref{eq:farea} and \eqref{eq: new}, we get $r_p = R$ and since $\Omega$ is open and bounded, we must have that $\Omega = B_R^g(p)$.

\medskip

\emph{Step 3: An \emph{almost} everywhere isometry. } Let $\Psi=\Psi_p$ be the map defined in
\eqref{eq:radial-comparison-map}, and set
$F:=\Psi^{-1}|_{B_R^{\delta^{\bar k}}}$.
This is a locally Lipschitz homeomorphism {onto $\Omega = B_R^g(p)$}, and $F$ is $1$-Lipschitz and then differentiable almost everywhere. Let $d^{\bar k}$ be the distance induced by $\delta^{\bar k}$. Applying
$d^{\bar k}(x,x')\le d(F(x),F(x'))$ to $x'=\exp_x^{\mathbb{M}^n_{\bar k}}(tv)$, dividing by $|t|$, and passing to the limit, at every differentiability point, we get
\[
|dF_x(v)|_g\ge |v|_{\delta^{\bar k}}, \quad \forall v\in T_x\mathbb{M}_{\bar k}^n.
\]
Therefore, its Jacobian satisfies $J_F\ge1$ almost everywhere. Since $F$ is injective {and $F(B_R^{\delta^{\bar k}}) = \Omega$ by Step 2}, the Riemannian area formula gives
\begin{equation*}
    \vol_{\delta^{\bar k}}(B_R^{\delta^{\bar k}}) = \vol_g(\Omega) =\int_{B_R^{\delta^{\bar k}}}J_F\,d\vol_{\delta^{\bar k}}.%, \qquad J_F\ge1\ \text{almost everywhere}.
\end{equation*}
We then conclude $J_F=1$ almost everywhere and, in particular,
\begin{equation}\label{eq:recoveryderivative}
|dF_x(v)|_g=|v|_{\delta^{\bar k}}
\quad\text{for all }v\text{ at almost every }x\in B_R^{\delta^{\bar k}}.
\end{equation}

\smallskip\noindent\emph{Step 3: The isometry. }
We show \eqref{eq:recoveryderivative} for all $x\in B_R^{\delta^{\bar k}}$ through a simple approximation argument. Fix distinct $x,y\in B_R^{\delta^{\bar k}}$, and let $\gamma:[0,L]\to B_R^{\delta^{\bar k}}$ be the unit-speed model geodesic from $x$ to $y$.

Choose a parallel orthonormal frame $E_1,\ldots,E_{n-1}$ in the normal bundle of $\gamma$. For sufficiently small {$\tau>0$}, the tubular map
\[
\Theta(t,z)=\exp_{\gamma(t)}^{\mathbb{M}^n_{\bar k}}
\left(\sum_{i=1}^{n-1}z_iE_i(t)\right),
\qquad (t,z)\in(-\tau,L+\tau)\times B_\tau^{\mathbb R^{n-1}},
\]
is a smooth diffeomorphism onto its image, and
$\Theta([0,L]\times B_\tau^{\mathbb R^{n-1}})\subset B_R^{\delta^{\bar k}}$.
The set of nondifferentiability points of $F$ has zero model volume and its preimage under the tubular diffeomorphism has zero Lebesgue measure. By Fubini, for almost every $z\in B_\tau^{\mathbb R^{n-1}}$, the curve $t\mapsto\Theta(t,z)$ meets this set in a set of one-dimensional measure zero.

The composition $t\mapsto F(\Theta(t,z))$ is absolutely continuous, because $F$ is locally Lipschitz and each nearby curve has compact image. For almost every $z$, by \eqref{eq:recoveryderivative}, we get $\left|\partial_tF(\Theta(t,z))\right|_g
 = |\partial_t\Theta(t,z)|_{\delta^{\bar k}}$, for almost every $t\in[0,L]$. This in turn implies $d\bigl(F(\Theta(0,z)),F(\Theta(L,z))\bigr) \le \int_0^L|\partial_t\Theta(t,z)|_{\delta^{\bar k}}\,dt$. Choose admissible $z_k\to0$. The endpoints converge to $x$ and $y$, $F$ is continuous, and the nearby model curves converge to $\gamma$ in $C^1$. Passing to the limit yields $d(F(x),F(y))\le L=d^{\bar k}(x,y)$. The reverse inequality follows directly from the $1$-Lipschitz property of $\Psi = F^{-1}$. Thus $F$ is an isometry and this concludes the proof.
\end{proof}

%\newpage

\appendix

\section{Proof of the technical lemmas}
\label{appendix:restriction}

\subsection{Proof of item (i) in Lemma~\ref{lem:restriction}}\label{app:lem restriction item i}
Let $\eta\in C^1_c(\Sigma)$ be nonnegative and assume first that
$\spt\eta$ is contained in one ambient chart $U$.
Choose $V\Subset U$ containing $\spt\eta$, and let
$ u _\varepsilon:= u *\rho_\varepsilon$ where $\rho_\varepsilon$ is a nonnegative standard mollifier. For sufficiently small $\varepsilon$, $ u _\varepsilon$ is a $C^{3,\alpha}$ function defined on a neighborhood of $\overline V$, and $u _\varepsilon\longrightarrow u$ in $C^1(\overline V)$.

In these coordinates, $\mathrm{Hess}_g u \leq B$ becomes $D^2 u \le B+\Gamma d u $, where
$(\Gamma d u )_{ij}:=\sum_k\Gamma^k_{ij}\partial_k u $ and $\Gamma_{ij}^k$ denote the Christoffel symbols.
Since weak derivatives commute with convolution and
$\rho_\varepsilon\ge0$, we obtain
$D^2 u _\varepsilon
=(D^2 u )*\rho_\varepsilon
\le(B+\Gamma d u )*\rho_\varepsilon$ and using that $\operatorname{Hess}_g u _\varepsilon
=D^2 u _\varepsilon-\Gamma d u _\varepsilon$,
we then get
\[
\operatorname{Hess}_g u _\varepsilon\le B+E_\varepsilon,\ \text{and}\
E_\varepsilon:=(B+\Gamma d u )*\rho_\varepsilon
-B-\Gamma d u _\varepsilon.
\]
Since $B$, $\Gamma$, and $d u $ are continuous, $(B+\Gamma du)*\rho_\varepsilon$ converges uniformly to $B+\Gamma d u $ on $\overline V$.
Together with $ u _\varepsilon\to u $ in
$C^1(\overline V)$, this gives $E_\varepsilon\to0$ uniformly.
By uniform ellipticity of $g$ on $\overline V$, there exist
$\delta_\varepsilon\ge0$, with $\delta_\varepsilon\to0$, such that $|E_\varepsilon(w,w)|\le\delta_\varepsilon g_p(w,w)$ for every $p\in\overline V$ and $w\in T_pZ$. Consequently,
\begin{equation}\label{eq:scalarapprox-appendix}
\operatorname{Hess}_g u _\varepsilon
\le B+\delta_\varepsilon g \text{ on }V.
\end{equation}

For the $C^{3,\alpha}$-function $u _\varepsilon$, since $\nabla^\Sigma u_\varepsilon = \nabla^g u_\varepsilon - (\partial_{\nu_\Sigma}u_\varepsilon) {\nu_{\Sigma}}$, a direct computation gives $\Delta_\Sigma u _\varepsilon =\operatorname{tr}_{T\Sigma} \operatorname{Hess}_g u _\varepsilon -(n-1)H_\Sigma\,\partial_{\nu_\Sigma}u_\varepsilon$.
This, taking the tangential trace in
\eqref{eq:scalarapprox-appendix}, multiplying by $\eta$,
and integrating by parts (recall that $\spt \eta\Subset \Sigma$), yield
\[
-\int_\Sigma
\langle\nabla_\Sigma u _\varepsilon,
\nabla_\Sigma\eta\rangle\,d\cH_g^{n-1}\le\int_\Sigma\eta\bigl(
\operatorname{tr}_{T\Sigma}B+(n-1)\delta_\varepsilon
-(n-1)H_\Sigma\,\partial_{\nu_\Sigma}u_\varepsilon
\bigr)\,d\cH_g^{n-1}.
\]
Letting $\varepsilon\to0$ and using the $C^1$ convergence proves the lemma for test functions compactly supported in one chart.

For a general nonnegative test function $\eta\in C^1_c(\Sigma)$, choose
a finite nonnegative partition of unity $\{\chi_i\}_i$ on its support, apply
the last displayed inequality to each $\chi_i\eta$, and sum on $i$ to conclude. Replacing $ u $ and $B$ by $-u$ and $-B$ gives the reversed
inequality.

\subsection{Proof of item (ii) in Lemma~\ref{lem:restriction}}\label{app:lem restriction item ii}
All the components of the proof are already in the proof of item (i), we only have to show \eqref{eq:scalarapprox-appendix} works well with the partition of unity. Fix a compact set $K\subset Z$ {and a relatively compact neighborhood $U$ of $K$}. Choose finitely many ambient charts $U_i$ covering {$\overline{U}$} and a nonnegative {$C^{3,\alpha}$} partition of unity $\chi_i$, with $\spt\chi_i\Subset U_i$ and $\sum_i\chi_i=1$ on a neighborhood of {$\overline{U}$}. For sufficiently small $\varepsilon$, let $ u _{\varepsilon,i}$ be the mollification constructed above and then \eqref{eq:scalarapprox-appendix} holds for $u_{\varepsilon,i}$ with $\delta_{\varepsilon ,i}\to 0$. Define $ u _\varepsilon:=\sum_i\chi_i u _{\varepsilon,i}${, which is $C^{3,\alpha}$ in the fixed atlas}. Since $\sum_i\chi_i=1$, we have $\sum_i d\chi_i=0$ and $\sum_i\operatorname{Hess}_g\chi_i=0$ on this neighborhood. Using these identities, the product rule, and setting $f_{\varepsilon,i}:=u_{\varepsilon,i}-u$, we obtain
\[
    \operatorname{Hess}_g u _\varepsilon
    = \sum_i\chi_i \operatorname{Hess}_g u _{\varepsilon,i} +\sum_i\left( d\chi_i\otimes df _{\varepsilon,i}+df_{\varepsilon,i}\otimes d\chi_i +f_{\varepsilon,i} \operatorname{Hess}_g\chi_i\right).
\]
{Since $u\in C^{1,1}_{\mathrm{loc}}$, standard mollification gives
$f_{\varepsilon,i}\to0$ locally in both $C^1$ and $W^{2,p}$ for every
$p\geq1$. In particular, the $C^1$ convergence implies that
the second sum converges uniformly to zero on $K$.} As above, uniform ellipticity gives numbers $a_\varepsilon\ge0$, with $a_\varepsilon\to0$, such that this sum is bounded above by $a_\varepsilon g$. Since the partition is nonnegative, the first sum satisfies
\[
\sum_i\chi_i\operatorname{Hess}_g u _{\varepsilon,i}
\le\sum_i\chi_i(B+\delta_{\varepsilon,i}g)
\le B+\left(\max_i\delta_{\varepsilon,i}\right)g.
\]
Consequently,
$\operatorname{Hess}_g u _\varepsilon
\le B+\delta_\varepsilon g$ on $K$, with
$\delta_\varepsilon:=
\max_i\delta_{\varepsilon,i}+a_\varepsilon\to0$.
Moreover, $u _\varepsilon- u  =\sum_i\chi_i f_{\varepsilon,i}$ near {$\overline{U}$},  {so the finite and fixed partition of unity gives $u_{\varepsilon} \to u$ in $C^1(\overline{U})$ and $W^{2,p}(U)$.} This finishes the proof.

\subsection{Proof of item (i) in Lemma~\ref{lem: prop of f}}\label{app: prop of f i}
We first prove {$|d\Phi^{\bar k}_{0,\lambda}w|_{\sigma} = f_{0,\lambda}(r)|w|_{\delta^{\bar k}}$} by hand. Recall that $\Phi^{\bar k}_{0,\lambda}$ is given by the inverse stereographic projection with radial coordinate $z=\lambda s_{\bar k}(r/2)/s_{\bar k}'(r/2)$. Since $s_{\bar k}''=-\bar k s_{\bar k}$ and $(s_{\bar k}')^2+\bar k s_{\bar k}^2=1$, we have $\partial_r z=\lambda/(2s_{\bar k}'(r/2)^2)$, and hence $2\partial_r z/(1+z^2)=f_{0,\lambda}(r)$. Moreover, using the identity $s_{\bar k}(r)=2s_{\bar k}(r/2)s_{\bar k}'(r/2)$, we obtain $2z/(1+z^2)=f_{0,\lambda}(r)s_{\bar k}(r)$. Since the round metric on the sphere in inverse stereographic coordinates is $4(1+z^2)^{-2}(dz^2+z^2\sigma)$, it follows that $(\Phi^{\bar k}_{0,\lambda})^*\sigma =4(1+z^2)^{-2}((\partial_r z)^2dr^2+z^2\sigma) =f_{0,\lambda}(r)^2(dr^2+s_{\bar k}(r)^2\sigma) =f_{0,\lambda}(r)^2\delta^{\bar k}$.

\subsection{Proof of item (ii) in Lemma~\ref{lem: prop of f}}\label{app: prop of f ii}
Now, we prove the inequality in the lemma. Straightforward computations by directly taking the derivatives give
\begin{align}
    \partial_r\log f &=-\frac{(\lambda^2-\bar k)s_{\bar k}(r)}{2\bigl(1+(\lambda^2-\bar k)s_{\bar k}(r/2)^2\bigr)}<0, \text{ for }r >0 ,\label{eq:radialfirst}\\
    -(\partial_r\log f)\frac{s_{\bar k}'(r)}{s_{\bar k}(r)} &=\frac12\bigl(-\bar k+f(r)^2+(\partial_r\log f)^2\bigr), \label{eq:radial}\\
    \partial_r^2\log f &=(\partial_r\log f)^2 +(\partial_r\log f)\frac{s_{\bar k}'(r)}{s_{\bar k}(r)}\label{eq:radialsecond}.
\end{align}

Next, we prove the following inequality in the weak sense
\begin{equation}\label{eq:hessu}
        \Hess_g\log f \le d\log f\otimes d\log f -\frac12(-\bar k+f^2+|\nabla\log f|^2)g.
\end{equation}
Recall that $\mathrm{Hess}_g \log f = \partial_r^2\log f dr\otimes dr + \partial_r\log f\mathrm{Hess}_gr$, then, by $\partial_r \log f <0$ (see \eqref{eq:radialfirst}) and \eqref{eq:hessr}, we obtain $\mathrm{Hess}_g \log f \leq (\partial_r^2\log f - \partial_r \log f s_{\bar k}'/s_{\bar k}) dr\otimes dr + (\partial_r \log f s_{\bar k}'/s_{\bar k}) g$. Putting this, \eqref{eq:radialsecond}, and \eqref{eq:radial} together, we derive \eqref{eq:hessu}. The same inequality holds {weakly} across $p$.
Indeed, {$f$ is a smooth function of $r_p^2$ near $p$, so $\log f\in C^{1,1}_{\mathrm{loc}}(Z)$ and $\nabla\log f(p)=0$.}

Recall that $\Delta_\Sigma \log f = \mathrm{div}_\Sigma(\nabla^g\log f) - \mathrm{div}_\Sigma((\partial_{\nu_\Sigma}\log f)\nu_\Sigma)$ and then $\Delta_\Sigma \log f = \mathrm{tr}_{T\Sigma}\mathrm{Hess}_g\log f - (n-1)H_\Sigma\partial_{\nu_\Sigma}\log f$ almost everywhere on $\Sigma$. We apply Lemma~\ref{lem:restriction}(i) to $u=\log f$ and $B$ being the right-hand side of \eqref{eq:hessu} to obtain (in the weak sense)
$$ \Delta_\Sigma \log f\leq |\nabla_\Sigma\log f|^2 - \frac{n-1}{2} (-\bar k+f^2 + |\nabla \log f|^2) - (n-1)H_\Sigma\partial_{\nu_\Sigma}\log f. $$
This inequality and $|\nabla\log f|^2=|\nabla_\Sigma\log f|^2+(\partial_{\nu_\Sigma}\log f)^2$ conclude the proof.

\subsection{Proof of Lemma~\ref{lem:balance}}\label{appendix:balancing}
Fix $o\in Z$, and choose $R>\max_{x\in\spt\mu}d(o,x)$. 
For $\xi\in T_oZ$ with $|\xi|\le {R}$, put $p=\exp_o\xi$. Parallel transport along the unique radial geodesic from $p$ to $o$ gives an isometry $P_{p,o} :T_pZ\to T_oZ$ and we define
\[
F(\xi,\lambda)=
\int_Z (P_{p, o},\mathrm{id}_{\R})
\Phi_{p,\lambda}(x)\,d\mu(x)
\in T_oZ\oplus\R.
\]
We write $F=(F_1,F_2)$ according to these two factors.

\medskip

\emph{Step 1: Continuity of $F$.}
The unique geodesics $\gamma$ depend continuously on their endpoints by the $\mathrm{CAT}$ property, see \cite[Ch II, Prop 1.4(1), pg 160]{BH}. Because the parameter interval is $[0,1]$, the constant speed is $|\dot\gamma_k|_g=d(p_k,o_k)$, which is uniformly bounded. On compact sets, we have $g_z(w,w)\ge a|w|_{\delta_{euc}}^2$ for some $a>0$ independent of $z$. Consequently, $|\dot\gamma_k|_{\delta_{euc}}\le a^{-1/2}|\dot\gamma_k|_g$.
So the velocities are uniformly bounded on compact sets as well.

In coordinates, $\ddot\gamma_k^i
=-\Gamma^i_{j\ell}(\gamma_k)\dot\gamma_k^j\dot\gamma_k^\ell$ and the Christoffel symbols are continuous. Together with the velocity bounds, this gives a uniform acceleration bound. Hence the velocities are equicontinuous $|\dot\gamma_k(t)-\dot\gamma_k(s)|\le C|t-s|$. Arzel\`a--Ascoli gives a subsequence for which both positions and velocities converge uniformly. Since the paths already converge to $\gamma$, we must have $\dot\gamma_k\to\dot\gamma$ and the entire sequence converges in $C^1$.

For distinct limiting endpoints, this also gives continuity of the initial unit direction:
\[
\theta_{p_k}(o_k)
=\frac{\dot\gamma_k(0)}{d(p_k,o_k)}
\longrightarrow
\frac{\dot\gamma(0)}{d(p,o)}
=\theta_p(o).
\]

A parallel vector field $W$ along $\gamma$ satisfies $\dot W=-A(t)W$ with {$A(t)^i_\ell = \Gamma^i_{j\ell}(\gamma(t))\dot\gamma^j(t)$}. By the convergences above, $A_k\to A$ uniformly. Solutions of linear ODEs depend continuously on the coefficients and on their initial vectors. Thus the parallel transport maps vary continuously with the endpoints.

At $x=p$, the direction $\theta_p(x)$ is not defined, but the norm of the first component of $\Phi_{p,\lambda}$ (cf. \eqref{eq:Phi}) is at most $0\le 2\lambda \frac{s_{\bar k}(r_p(x)/2)}{s_{\bar k}'(r_p(x)/2)}\le\lambda r_p(x)$, which tends to zero as $x\to p$, uniformly when $\lambda$ lies in a compact interval. The last component of $\Phi_{p,\lambda}$ tends to one. Thus the transported integrand is jointly continuous at $p=x$. Since $\mu$ is compactly supported, we have that $F$ is continuous.

\medskip

\emph{Step 2: The signs on the faces of the cylinder.}
On the side $|\xi|=R$, the point $p$ lies outside $\spt\mu$. Radial parallel transport gives $P_{p, o}(R\nabla r_o(p))=\xi$. For $x\in\spt\mu$, convexity of $r_o$ along the unit-speed geodesic from $p$ to $x$ gives
\[
\frac{\langle\xi,P_{p, o}\theta_p(x)\rangle}{R} = \langle\nabla r_o(p),\theta_p(x)\rangle
\le\frac{r_o(x)-r_o(p)}{d(p,x)}
=\frac{r_o(x)-R}{d(p,x)}<0.
\]

The scalar multiplying $\theta_p$ in the first component of $\Phi_{p,\lambda}$ (see \eqref{eq:Phi}) is strictly positive here. Integrating, we obtain
\begin{equation}\label{eq:balanceside}
\langle\xi,F_1(\xi,\lambda)\rangle<0
\quad\text{when }|\xi|=R, \lambda>0.
\end{equation}

For every $|\xi|\le R$ and $x\in\spt\mu$, we have $r_p(x)<2R$. Choose $\lambda_0>0$ so that $\lambda_0\frac{s_{\bar k}(R)}{s_{\bar k}'(R)}<1$. Then the last component of $\Phi_{p,\lambda}$ (cf. \eqref{eq:Phi}) is strictly positive, uniformly over this compact set. Hence
\begin{equation}\label{eq:balancebottom}
F_2(\xi,\lambda_0)>0
\quad\text{for }|\xi|\le R.
\end{equation}

By the assumption that $\mu$ is nonatomic and compactly supported, and continuity of measures, we obtain that $\sup_{p\in Z}\mu(B_\tau^g(p))\longrightarrow0$ as $\tau\downarrow0$.

Choose $\tau>0$ such that this supremum is less than $1/4$, and choose $\lambda_1>\lambda_0$ with
$\lambda_1\frac{s_{\bar k}(\tau/2)}{s_{\bar k}'(\tau/2)}\ge\sqrt3$.
If $x\notin B_\tau^g(p)$, the last component of $\Phi_{p,\lambda_1}$ in \eqref{eq:Phi} is at most $-1/2$; inside the ball it is at most one. Therefore, since $\mu$ is a probability measure, we have
\begin{equation}\label{eq:balancetop}
F_2(\xi,\lambda_1) \leq \mu(B_\tau^g(p)) - \frac12\mu(B_\tau^g(p)^c) 
\le-\frac12+\frac32\mu(B_\tau^g(p))
\le-\frac18<0.
\end{equation}

\smallskip\noindent\emph{Step 3: A zero in the interior.}
Consider the compact convex cylinder
\[
C=\{\xi\in T_oZ:|\xi|\le R\}\times[\lambda_0,\lambda_1]
\subset T_oZ\oplus\R.
\]
The map $\zeta\mapsto\operatorname{proj}_C(\zeta+F(\zeta))$ is a continuous self-map of $C$. By Brouwer's fixed point theorem it has a fixed point $\zeta$. The characterization of Euclidean projection gives
\begin{equation}\label{eq:projection}
\langle F(\zeta),y-\zeta\rangle\le0,
\qquad\text{for every }y\in C,
\end{equation}
indeed, let $a:=\operatorname{proj}_C(\zeta+F(\zeta))$ and $z:= \zeta+F(\zeta)$, the segment $a+t(y-a)$ lies in $C$, and $a$ minimizes distance to $z$, we obtain \eqref{eq:projection} by deriving $
|z-a-t(y-a)|^2$ with respect to $t>0$.

If $\zeta$ lies on the lower face, an upward displacement (i.e., choosing $y > \zeta$ sufficiently close to $\zeta$) and \eqref{eq:projection} contradict \eqref{eq:balancebottom}. Analogously, if $\zeta$ lies on the upper face, a downward displacement and \eqref{eq:projection} contradict \eqref{eq:balancetop}. If $\zeta = (\zeta^\prime, \zeta_0)$ and $|\zeta^\prime|=R$, a small displacement in the direction $(-\zeta^\prime,0)$ and \eqref{eq:projection} contradict \eqref{eq:balanceside}. Therefore $\zeta$ is an interior point. 

Applying \eqref{eq:projection} with $y=\zeta \pm \epsilon\zeta$ for small enough $\epsilon>0$ forces $F(\zeta)=0$. This finishes the proof after applying the inverse of {$(P_{p,o}^{-1},\mathrm{id}_{\R})$} to the definition of $F$. 

\section{Proof of the Michael-Simon inequality}\label{app: MS}

\subsection{Proof of Theorem~\ref{prop:sob}}

Recall that $g$ is $C^{1,\alpha}$, and $H_\Sigma$ is locally bounded. 

\medskip

\emph{Step 1: A non-optimal inequality. }
Fix $U\Subset\Sigma$ and $\varepsilon>0$. At a point, we take a small chart $\varphi:B_r\subset \R^{n-1}\to \Sigma$ so that $(1-\tau)\mathrm{id}\le(g_{ij})\le(1+\tau)\mathrm{id}$. The sharp Euclidean Sobolev inequality of Aubin and Talenti \cite{Aubin1976,Talenti1976} together with standard Riemannian comparisons implies, for $v$ as in the theorem supported in $\varphi(B_r)$,
\begin{equation}\label{eqn:talenti aubin}
    c_n\left(\frac{1-\tau}{1+\tau}\right)^{(n-1)/2}
\|v\|_{L^q(\Sigma)}^2
\le (q-2)\|\nabla_\Sigma v\|_{L^2(\Sigma)}^2.
\end{equation}
Indeed, in the chart, $|\nabla_\Sigma v|^2=g^{ij}\partial_i v\,\partial_j v$ and $d\cH_g^{n-1}=\sqrt{\det g}\,dx$.
Inverting the metric bounds gives $(g^{ij})\ge(1+\tau)^{-1}\mathrm{id}$. Every eigenvalue of $g$ is at least $1-\tau$, so $\sqrt{\det g}\ge(1-\tau)^{(n-1)/2}$. Therefore 
$$\|\nabla_\Sigma v\|_{L^2(\Sigma)}^2 \ge\frac{(1-\tau)^{(n-1)/2}}{1+\tau}\|\nabla(v\circ \varphi)\|_{L^2(dx)}^2.$$ 
The bound $\sqrt{\det g}\le(1+\tau)^{(n-1)/2}$ implies $\| v\|_{L^q(\Sigma)}^q\le(1+\tau)^{(n-1)/2}\| v\circ\varphi\|_{L^q(dx)}^q$, taking the power $2/q$ leads to
\[
\|v\circ\varphi\|_{L^q(dx)}^2\ge(1+\tau)^{-(n-1)/q}\|v\|_{L^q(\Sigma)}^2.
\]
The last two displayed inequalities {and Aubin and Talenti \cite{Aubin1976,Talenti1976}} imply \eqref{eqn:talenti aubin}.

Cover $\overline U$ by finitely many such charts, and choose a partition of unity $\chi_i$ such that $\sum_i\chi_i^2=1$ on a neighborhood of $\overline U$. By Minkowski's or Young's inequality in $L^{q/2}$, we obtain $\|v\|_q^2 = \left\|\sum_i(\chi_i v)^2\right\|_{q/2} \le\sum_i\|\chi_i v\|_q^2$.
Moreover, $\sum_i\chi_i\nabla_\Sigma\chi_i=0$, so
\[
\sum_i|\nabla_\Sigma(\chi_i v)|^2
=|\nabla_\Sigma v|^2
+v^2\sum_i|\nabla_\Sigma\chi_i|^2.
\]
Applying \eqref{eqn:talenti aubin} to $\chi_i v$, summing in $i$, and choosing $\tau$ sufficiently small depending on $\varepsilon$, give
\begin{equation}\label{eq:localsob}
(c_n-\varepsilon)\|v\|_{L^q(\Sigma)}^2
\le (q-2)\|\nabla_\Sigma v\|_{L^2(\Sigma)}^2
+C_{\varepsilon,U}\| v\|_{L^2(\Sigma)}^2,
\end{equation}
for any $v$ as in the statement of the theorem. Standard density results for Sobolev spaces extend \eqref{eq:localsob} to $W^{1,2}_0(U)$ functions $v$, cf. \cite[Section~9.4, Remark~18]{Brezis2011}, applied in local charts.

\medskip
\emph{Step 2: Existence of a minimizer when below the threshold $c_n$.}
Set
\[
c(U)=\inf\{\mathcal{Q}(v):v\in W^{1,2}_0(U),\ \|v\|_{L^q(\Sigma)}=1\}.
\]
Equation \eqref{eq:square} gives $c(U)\ge0$. Suppose by contradiction $$c(U)<c_n,$$ and choose a minimizing sequence $v_k$. By H\"older's inequality, we have $\|v_k\|^2_{L^2(U)}
\le \bigl(\cH_g^{n-1}(U)\bigr)^{2/(n-1)}$ which{, by definition of $\mathcal{Q}$,} implies
\[
(q-2)\|\nabla_\Sigma v_k\|^2_{L^2(U)}
\le\mathcal{Q}(v_k)
-(n-1)\bar k\bigl(\cH_g^{n-1}(U)\bigr)^{2/(n-1)}.
\]
Thus the sequence is bounded in $W^{1,2}_0(U)$. After passing to a subsequence,
\[
v_k\rightharpoonup {v_0}\text{ in }W^{1,2}_0(U),\qquad
v_k\to {v_0}\text{ in }L^2(U)\text{ and almost everywhere},
\]
where the $L^2$ convergence follows since $W_0^{1,2}$ is compactly embedded into $L^2$ by Rellich's theorem, cf. \cite[Theorem~9.16 and Section~9.4, Remark~20]{Brezis2011}, applied to zero extensions in local charts. By Fatou's lemma, $v_0\in L^q(U)$ {and $\|v_0\|_q \leq 1$.}

Let $t=\| {v_0}\|_{L^q(U)}^q$, so $0\le t\le1$. The Br\'ezis--Lieb lemma \cite{BL} gives $\|v_k- {v_0}\|^q_{L^q(U)} = 1-t+o(1)$. This, strong $L^2$ convergence{, and weak $W^{1,2}_0$ convergence,} give
\[
\mathcal{Q}(v_k)=\mathcal{Q}( {v_0})
+(q-2)\|\nabla_\Sigma(v_k- {v_0})\|^2_{L^2(U)}
+o(1).
\]

Apply \eqref{eq:localsob} to $v_k- {v_0}$ and let $k\to\infty$ to obtain 
$$ c(U) \geq \mathcal{Q}(v_0) + (c_n-\varepsilon)(1-t)^{2/q}. $$
Let $\varepsilon\downarrow0$ and recall the definition of {$\mathcal{Q}$ and} $t$ to get 
\begin{equation}\label{eq:concentration}
c(U)\ge c(U)t^{2/q}+c_n(1-t)^{2/q}.
\end{equation}

If $c(U)=0$, this gives $t=1$. If $0<c(U)<c_n$ and $t<1$, by \eqref{eq:concentration} and since $2/q <1$, we get
\[
c(U) > c(U)\bigl(t^{2/q}+(1-t)^{2/q}\bigr)\ge c(U).
\]
This is a contradiction and hence $t=1$. Weak lower semicontinuity gives $\mathcal{Q}( {v_0})\le c(U)$, and, since $\| v_0\|_{L^q(U)} = t^{{1/q}} = 1$, we have $\mathcal{Q}( {v_0})= c(U)$.

\medskip

\emph{Step 3: Some variational inequalities.} For any bounded Lipschitz function $\zeta$ {with $\int_U |v_0|^q \zeta d\mathcal{H}^{n-1}_g = 0$}, consider $ {v_0}(1+s\zeta)$. For small $s$, the factor $1+s\zeta$ is positive, and Taylor expansion together with $\|v_0\|_{L^q(U)} = 1$ give
\begin{equation}\label{taylor}
    \| {v_0}(1+s\zeta)\|^q_{L^q(U)} 
    =1 +qs\int_U| {v_0}|^q\zeta\,d\cH_g^{n-1} + \frac{q(q-1)}2s^2
\int_U| {v_0}|^q\zeta^2\,d\cH_g^{n-1}+o(s^2).   
\end{equation}
The quotient ${\mathcal{Q}( {v_0}(1+s\zeta))}/{\| {v_0}(1+s\zeta)\|_q^{2}}$ has a minimum at $s=0$. The quotient rule at $s=0$, {\eqref{taylor} and $\int_U |v_0|^q \zeta d\mathcal{H}^{n-1}_g = 0$} then shows
\begin{equation}\label{eq:minfirst}
\begin{aligned}
    \int_U\Bigl((q-2)\langle\nabla_\Sigma {v_0},\nabla_\Sigma( {v_0}\zeta)\rangle +(n-1)(H_\Sigma^2+\bar k) v_0^2\zeta\Bigr) = c(U)\int_U| {v_0}|^q\zeta = 0.
\end{aligned}
\end{equation}

By \eqref{taylor} and the Taylor expansion of $s\mapsto (1+s)^{2/q}$, we obtain
\begin{equation*}\label{taylor Lq}
  \| {v_0}(1+s\zeta)\|_{L^q(U)}^2 = 1 + (q-1)s^2\int_U |v_0|^q\zeta^2\,d\cH^{n-1}_g + o(s^2).  
\end{equation*}
By definition of $\mathcal{Q}$ {and \eqref{eq:minfirst}}, we obtain 
\begin{equation*}\label{expansion Q}
    \mathcal{Q}(v_0(1+s\zeta)) = \mathcal{Q}(v_0) + s^2\mathcal{Q}(v_0\zeta).
\end{equation*}
Using {the last two displayed equations} and Taylor expanding in the quotient, we get
$$ \frac{\mathcal{Q}( {v_0}(1+s\zeta))}{\| {v_0}(1+s\zeta)\|_q^{2}} = \mathcal{Q}(v_0) + s^2\left( \mathcal{Q}(v_0\zeta) - (q-1)\mathcal{Q}(v_0)\int_U |v_0|^q\zeta^2\,d\cH^{n-1}_g \right) +o(s^2).$$

Since $v_0$ is a minimum for the quotient and $\|v_0\|_{L^q(U)} = 1$ {for $s$ small}, we obtain
\begin{equation*}\label{eq:minsecond}
\mathcal{Q}( {v_0}\zeta)
\ge(q-1)c(U)\int_U| {v_0}|^q\zeta^2\,d\cH_g^{n-1}.
\end{equation*}
Applying \eqref{eq:minfirst} with $\zeta^2$ and expanding the gradients, we also obtain the exact identity
\begin{equation*}\label{eq:minproduct}
\mathcal{Q}( {v_0}\zeta)
=c(U)\int_U| {v_0}|^q\zeta^2\,d\cH_g^{n-1}
+(q-2)\int_U v_0^2|\nabla_\Sigma\zeta|^2\,d\cH_g^{n-1}.
\end{equation*}
{Subtracting the last two displayed inequalities we obtain
\begin{equation}\label{eq: new B8}
\int_U v_0^2|\nabla_\Sigma\zeta|^2\,d\cH_g^{n-1} \geq c(U)\int_U| {v_0}|^q\zeta^2\,d\cH_g^{n-1}.
\end{equation}
All these calculations are valid when $c(U)=0$ as well.}

\medskip

\emph{Step 4: Using $c_n >c(U)$ we prove $v_0 = 0$.} The computations in this step are entirely analogous to Steps 1 and 2 in the proof of \Cref{prop: equality case+first failure point}.

Assume for the moment that $c_n>c(U)$ (as in Step 2) and $\Sigma$ is connected and that $\Sigma\setminus\overline U$ contains a nonempty open set. 

The measure $| {v_0}|^q\,d\cH_g^{n-1}$, extended by zero and viewed as a measure on $Z$, is a nonatomic probability measure with compact support. Then we can apply Lemma~\ref{lem:balance}, and let $w_1,\ldots,w_{n+1}$ be the spherical coordinates of the resulting map $\Phi$. They satisfy
\[
\sum_{j=1}^{n+1}w_j^2=1,
\qquad\int_U| {v_0}|^q w_j\,d\cH_g^{n-1}=0.
\]
Each $w_j$ is bounded and Lipschitz on a neighborhood of $\overline U$, so multiplication by it preserves $W^{1,2}_0(U)$.

Take $\zeta=w_j$ in \eqref{eq: new B8}, sum over $j$, and recall $\sum_jw_j^2=1$, to get
\[
c(U)\le\int_U {v_0}^2|d_\Sigma\Phi|^2\,d\cH_g^{n-1}.
\]
Combining this with \eqref{eq:dPhi} and \eqref{eq:square} yields
\begin{align*}
c(U)
&\le\int_U {v_0}^2|d_\Sigma\Phi|^2
\overset{\eqref{eq:dPhi}}{\le}(n-1)\int_U f^2 {v_0}^2\\
\overset{\eqref{eq:square}}&{\le}\mathcal{Q}( {v_0})
-\int_U\left( (n-1)(H_\Sigma+{\partial_{\nu_{\Sigma}}}\log f)^2 v_0^2  +(q-2)\left|\nabla_\Sigma {v_0}-\frac{n-3}{2} {v_0}\nabla_\Sigma\log f\right|^2\right)\\
&\le \mathcal{Q}(v_0) = c(U).
\end{align*}
Therefore $\nabla_\Sigma {v_0}
=\frac{n-3}{2} {v_0}\nabla_\Sigma\log f$ almost everywhere on $U$ and then $f^{-(n-3)/2} {v_0}$ has zero weak gradient on $U$. Since $v_0\in W^{1,2}_0(U)\cap L^q(U)$ and $f$ is $C^1$ and positive, we can extend $f^{-(n-3)/2}v_0$ to a function in $W^{1,2}_0(\Sigma)$ that is zero outside $\bar U$. Since $\Sigma$ is connected and $\Sigma\setminus\bar U$ contains an open set, $f^{-(n-3)/2}v_0$ is constantly equal to zero and, since $f>0$, this contradicts $\|v_0\|_{L^q(U)} = 1$.

We have then proved $c(U)\ge c_n$ whenever $\Sigma$ is connected and $\Sigma\setminus\overline U$ contains a nonempty open set.

\medskip

\emph{Step 5: General $\Sigma$ and $U$.}
For any Lipschitz function $v$ with compact support properly contained in a connected component $\Sigma_0$ of $\Sigma$, {\eqref{eq:sob}} holds for $v$.

If $\Sigma_0$ is compact and {a Lipschitz function $\tilde v$ has} $\mathrm{spt}( \tilde v )= \Sigma_0$, choose a point $p\in\Sigma_0$ and a cutoff $\eta_\delta$ that vanishes in $B_\delta(p)$, equals one outside $B_{2\delta}(p)$, and has gradient bounded by $C_0/\delta$. Then
\[
\int_\Sigma|\nabla_\Sigma\eta_\delta|^2\,d\cH_g^{n-1} \leq \int_{B_{2\delta}^g(p)\setminus B_\delta^g(p)}|\nabla_\Sigma\eta_\delta|^2\,d\cH_g^{n-1} \le C_1\delta^{n-3}\longrightarrow0.
\]
This shows that, for compactly supported in $\Sigma_0$ Lipschitz functions $\tilde v$, the functions $\eta_\delta \tilde v$ converge to $\tilde v$ in $W^{1,2}$ and $L^q$. Each $\eta_\delta \tilde v$ has proper compact support in $\Sigma_0$, so passing to the limit proves {\eqref{eq:sob}} for $\tilde v$.

Finally, since a compact support meets only finitely many connected components, we apply the inequality to each restriction, sum, and use 
$\sum_i a_i^{2/q}\ge(\sum_i a_i)^{2/q}$ (since  $2/q<1$), to obtain {\eqref{eq:sob}} on their union. This finishes the proof of the theorem in the case of Lipschitz functions.

The $W^{1,2}_0(U)$ assertion follows from standard density results for Sobolev spaces and using \eqref{eq:localsob} for convergence in $L^q$.

\begingroup\small\raggedright
\bibliographystyle{amsplain}
\bibliography{9biblio}
\endgroup

\end{document}